\newtheorem{theorem}{Theorem}[section]
\newtheorem{lemma}[theorem]{Lemma}
\newtheorem{corollary}[theorem]{Corollary}
\theoremstyle{definition}
\newtheorem{definition}[theorem]{Definition}
\newtheorem{example}[theorem]{Example}
\theoremstyle{remark}
\newtheorem{remark}[theorem]{Remark}
\numberwithin{equation}{section}
\begin{document}

\title[To prove the Four Color Theorem unplugged]{A renewal approach to prove\\ the Four Color Theorem unplugged\\[1.5ex]{\footnotesize Part I:}\\[0.5ex] RGB-tilings on maximal planar graphs}

\author{Shu-Chung Liu}
\address{Institute of Learning Sciences and Technologies, National Tsing Hua University, Hsinchu, Taiwan}
\email{sc.liu@mx.nthu.edu.tw}


\subjclass[2020]{Primary 05C10; 05C15}

\date{\today}


\keywords{Four Color Theorem; Kempe chain; triangulation; edge-coloring; RGB-tiling; $e$-diamond}

\begin{abstract}
This is the first part of three episodes to demonstrate a renewal approach for proving the Four Color Theorem without checking by a computer. The second and the third episodes have subtitles: ``R/G/B Kempe chains in an extremum non-4-colorable MPG'' and ``Diamond routes, canal lines and $\Sigma$-adjustments,'' where R/G/B stand for red, green and blue colors to paint on edges and an MPG stands for a maximal planar graph.  In this first part, we introduce R/G/B-tilings as well as their tri-coexisting version RGB-tiling on an MPG or a semi-MPG. We associate these four kinds of edge-colorings with 4-colorings by 1/2/3/4 on vertices in MPS's or semi-MPG's. Several basic properties for tilings on MPG's and semi-MPG's are developed. Especially the idea of R/G/B-canal lines, as well as canal system, is a cornerstone.  This work started on May 31, 2018 and was first announced by the author~\cite{Liu2020} at the Institute of Mathematics, Academia Sinica, Taipei, Taiwan, on Jan.\ 22, 2020, when the pandemic just occurred. 
\end{abstract}     

\maketitle

\section{Introduction} \label{sec:Introduction}

The famous Four Color Theorem has been challenging and soaking up the mind of many mathematicians for 170 years. The original four-color conjecture was first proposed by Francis Guthrie in 1852, due to his working experience: trying to color the map of counties of England~\cite{MarMou2012}. No doubt a good application was to color the map of the world by assuming no enclave, or made some adjustment to fix very few enclave lands. Graph theory experts soon translated this problem to be a new mathematical model: set every county (or region, country) to be a vertex, and the relation of two neighboring countries to be an edge. A map without enclave lands then turns to a planar graph. In graph-theoretic terms, the Four Color Theorem states that any simple planar graph $G$ has its chromatic number $\chi (G)\leq 4$.  The chromatic number  $\chi$ is defined to be the minimum number of colors assigned individually to every vertex of $G$ such that adjacent vertices have different colors.

The first proof of the Four Color Theorem came out on June 21, 1976. Kenneth Appel and Wolfgang Haken at the University of Illinois~\cite{AppelHaken1977, CharLes2005}:
   \begin{quote}
If the four-color conjecture were false, there would be at least one map with the smallest possible number of regions that requires five colors. The proof showed that such a minimal counterexample cannot exist.  
   \end{quote}
They wrote a program and checked by computer that a minimal counterexample to the four-color conjecture could not exist. In their check, 1,834 possible reducible configurations were achieved their own 4-coloring one-by-one. Since then several different proofs for the Four Color Theorem assisted by computer were contributed to mathematical societies. However, the mathematicians in the field of graph theory never give up the hope to find some kind of artificial proof. A proof for The Four Color Theorem unplugged is still one of the most-wanted academic research in the Math world. 

Also well-known and even widely acclaimed as a textbook proof for the five color theorem was given by Alfred Kempe in 1879~\cite{Kempe1879} and Percy Heawood in 1890~\cite{Heawood1890}. Kempe's proof was once claimed for the Four Color Theorem until 1890, when Percy Heawood indicated an error. In addition to exposing the flaw in Kempe's proof, Heawood proved the five color theorem and generalized the four color conjecture to topological surfaces of arbitrary genus~\cite{Heawood1890}.  Because this proof for five color theorem is so classical and easy to comprehend in graph theory, it is shown in the math world popularly. ``Kempe chains'' given on the title of this article is one of the important tools invented by Kempe in that paper. The title also said ``a renewal approach'', which means we will provide modification on Kempe chains, and offer a new point of view to deal with the (pseudo) extremum as a  non-4-colorable planar graph.  

Another early proposed proof was given by Peter Guthrie Tait in 1880~\cite{Tait1880}.  It was not until 1891  that Tait's proof was shown incorrect by Julius Petersen.  In fact, over his lifetime Tait gave several incomplete or incorrect proofs of the Four Color Theorem. In graph theory, Petersen dedicated two famous contributions: the Petersen graph, exhibited in 1898, served as a counterexample to Tait's proof on the Four Color problem: a bridgeless 3-regular graph is factorable into three 1-factors~\cite{Petersen1898}, and the theorem: a connected 3-regular graph with at most two leaves contains a 1-factor.

The purpose of this study is to offer a renewal method improved from Kempe's classical proof. We try to write the series of three parts self-sustaining and the author only need the basic knowledge on graph theory.

\section{Vertex-colorings and RGB-edge-colorings} \label{sec:RGB1}

Normally we use the natural numbers 1, 2, 3, 4, 5,... to color vertices such that two adjacent vertices have different colors (different number). As a renewal method, the corresponding edge-coloring after a vertex-coloring has been done.

Let us start up with $K_4$ which is a basic graph to color by all $\{1,2,3,4\}$. Originally, the leftmost graph $K_4$ has only vertices and edges; so far no vertex-coloring and no edge-coloring. We provide two different vertex-colorings of $K_4$  with Co[$v_i$:i] and Co[$v_1$:4, $v_2$:3, $v_3$:1, $v_4$:2] which are shown as the middle and the right graphs in Figure~\ref{fig:1234EdgeColor} respectively. For convenience, we will also use vertex-coloring function $f:V\rightarrow \{1,2,\ldots\}$. The corresponding edge-colorings of them are demonstrated in the same time. Basically, red edges (R-edges) are for those edges incident to colors 1 and 3, and those edges incident to colors 2 and 4;  green edges (G-edges) are for 1 and 4, as well as 2 and 3; blue edges (B-edges) are for 1 and 2, as well as 3 and 4. In other words
    \begin{center}
    \begin{tabular}{rcl}
{1-3 and 2-4 edges} & \multirow{4}{3cm}{shall be painted by} 
& red or R, r;\\
{1-4 and 2-3 edges} &  & green or G, g;\\
{1-2 and 4-4 edges} &  & blue or B, b;\\
{uncertain in RGB}&  & black or bl, sometime gray. 
    \end{tabular}
    \end{center}
Among these colored edges, we assign those one incident to 1 by a thick line, because all red edges (as well as green and blue edges respectively) are majorly separated into two sub-classes. For a same edge-color, the edges of the two sub-classes never incident to each other. To assign those edges incident to 1 ``thick'' red, green, blue is not mandatory, because our purpose is just to distinguish the two disconnect sub-classes. The property of disconnecting is the main idea of Kempe's proof. For any graph with a 4-coloring function, we can immediate reach an RGB-edge-coloring. However, the the reverse direction is not correct unless we set up some special circumstance. By the way, we draw the rightmost $K_4$ differently to emphasize that it is planar. Most of time we shall draw planar graphs in this way.
   \begin{figure}[h]
   \begin{center}
   \includegraphics{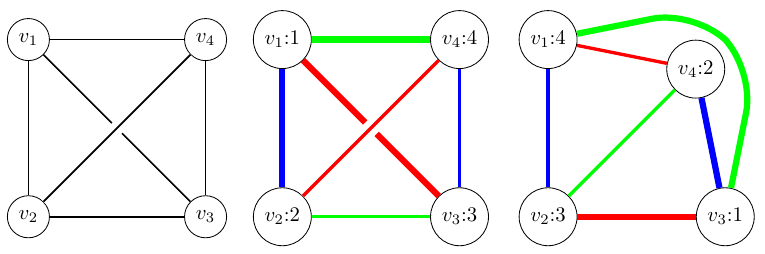}
   \end{center}
   \caption{The corresponding RGB-edge-colorings} \label{fig:1234EdgeColor}
   \end{figure}

\section{$\mathcal{N}4$, $m\mathcal{N}4$ and $e\mathcal{N}4$} \label{sec:eN4}

From now on all graphs in this article are simple, planar and connected, i.e., no self-loops, no multiple edges and at least a path between any two vertices. Actually multiple edges change nothing for coloring, but they are not only annoying, but also disturb the structure of our planar graphs. Also disconnected graphs cause no difference for the Four Color Map Theorem. Trying to color the map of counties of England and Taiwan in the same time is actually two independent and irrelevant processes.

If the four-color conjecture were false, there would be at least one planar graph $G$ which is non-4-colorable. Let us denote the following three sets under the assumption that  the four-color conjecture were false:
   \begin{eqnarray*}
   \mathcal{N}4 &=& \{G \mid \text{$G$ is a non-4-colorable simple connected planar graph.}\} \\
   m\mathcal{N}4 &=& \{G \mid \text{$G$ is minimal in $N4$ according to the partially ordered}\\
        & &  \text{ set with the inclusion relation of graphs.}\} \\
   e\mathcal{N}4 &=& \{G\in m\mathcal{N}4 \mid \text{$G$ is a minimum w.r.t.\ the number of vertices.}\}    
   \end{eqnarray*}
Clearly, $e\mathcal{N}4 \subseteq m\mathcal{N}4 \subseteq \mathcal{N}4$. Also by the theory of the partially ordered, 
$$e\mathcal{N}4=\emptyset\ \Leftrightarrow\ m\mathcal{N}4 =\emptyset\ \Leftrightarrow\ \mathcal{N}4=\emptyset.$$ 
Of course, the set $e\mathcal{N}4$ turns to be the target of the mathematicians' investigation.

If a planar $G$ has a bridge $ab$ such that $G=G_1 \biguplus \{ab\} \biguplus G_2$, then $G$ is not in $m\mathcal{N}4$, even though $G$ might be non-4-colorable. This idea also works for a planar $G$ with a cut vertex. Therefore, any graph $M\in m\mathcal{N}4$ is at least 2-connected and then any vertex $u\in M$ lays on at least one cycle of $M$. Furthermore, there exist several minimal cycles passing through $u$ and these cycles are called \emph{facets}. In geometry, the name ``facet'' means a triangle, a rectangles, a pentagons, etc., witch is a 2-dimensional substructure belonging to a 3-dimensional polyhedron or polytope. Clearly, the number of facets involving $u$ equals $\deg(u)$ because $M$ is a planar graph.

   \begin{theorem} \label{thm:V5more}
Let $M\in m\mathcal{N}4$. (a) Any supergraph of $M$ is non-4-colorable and any non-trivial subgraph of $M$ is 4-colorable. (b) Any vertex $v\in M$ must have $\deg(v)\ge 5$.   
   \end{theorem}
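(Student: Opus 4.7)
The plan for part (a) is a direct unpacking of the definition of $m\mathcal{N}4$. For the supergraph statement I would argue contrapositively: if a supergraph $M'\supseteq M$ admitted a proper 4-coloring, restricting it to $V(M)$ would give a 4-coloring of $M$, contradicting $M\in\mathcal{N}4$. For the subgraph statement, the minimality of $M$ under inclusion says that no element of $\mathcal{N}4$ is properly contained in $M$. Every connected proper subgraph of $M$ is simple and planar, hence it would lie in $\mathcal{N}4$ if non-4-colorable, which is forbidden; a disconnected proper subgraph is then 4-colored by treating its components independently, each being a connected proper subgraph of $M$ handled by the previous sentence.

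For part (b), I will argue by contradiction: suppose some $v\in M$ has $\deg(v)\le 4$. Deleting $v$ gives a proper subgraph $M-v$, which by part (a) admits a 4-coloring $f:V(M-v)\to\{1,2,3,4\}$. The strategy is to modify $f$ if necessary and then color $v$, yielding a 4-coloring of $M$ and a contradiction. If $\deg(v)\le 3$, or if the neighbors of $v$ happen to use at most three colors under $f$, then a free color for $v$ exists immediately. The only remaining case is $\deg(v)=4$ with the four neighbors carrying all four colors; here the classical Kempe-chain device enters.

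In that case I would label the neighbors of $v$ as $v_1,v_2,v_3,v_4$ in cyclic order around $v$ in the planar embedding, with $f(v_i)=i$. Consider the $\{1,3\}$-Kempe chain, namely the connected component of $v_1$ in the subgraph of $M-v$ induced on colors $1$ and $3$. If $v_3$ lies outside this component, I swap colors $1$ and $3$ throughout the component; no neighbor of $v$ then carries color $1$, and $v$ takes color $1$. If instead $v_3$ lies inside, there is a $\{1,3\}$-alternating path $P_{13}$ from $v_1$ to $v_3$ in $M-v$; together with the edges $v_1v$ and $vv_3$, the path $P_{13}$ closes into a cycle $C$ whose planar embedding is a Jordan curve separating $v_2$ from $v_4$. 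Since every internal vertex of $P_{13}$ carries color $1$ or $3$, no $\{2,4\}$-alternating walk can cross $C$, so the $\{2,4\}$-Kempe chain of $v_2$ avoids $v_4$; swapping colors $2$ and $4$ on that chain frees color $2$ for $v$. Either branch produces a 4-coloring of $M$, the desired contradiction.

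The main obstacle, and the only place where graph-theoretic depth beyond definition-chasing is required, is the planarity-based separation argument that prevents the simultaneous survival of the $\{1,3\}$- and $\{2,4\}$-chains. Verifying that $C$ is a genuine Jordan curve and that $v_2$, $v_4$ lie in distinct complementary regions rests on the cyclic order of the neighbors of $v$ supplied by the planar embedding, which is the hinge of Kempe's original argument and of the renewal approach developed in the rest of this paper.
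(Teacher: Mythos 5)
Your proposal is correct and takes essentially the same route as the paper: the same unpacking of minimality for (a), the same reduction to $\deg(v)=4$, and for that case the classical Kempe-chain alternative with the planarity-based separation of the $\{1,3\}$- and $\{2,4\}$-chains. The only difference is cosmetic: the paper phrases the two chains as the two mutually disconnected subclasses of red edges in its RGB-edge-coloring of $M-\{v\}$ and swaps colors $2$ and $4$ on the red component of $v_4$, whereas you work with vertex Kempe chains directly and swap on the chain of $v_2$.
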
           
   \begin{proof}
(a): About the non-trivial subgraph, it is the basic minimal property in a partially ordered set. As for  
any supergraph of $M$, say $M'$, we can first color $M$. Only at this stage, 4 different colors are not enough. So $\chi(M')>4$.        

(b): By the definition of $m\mathcal{N}4$, we know $M$ is non-4-colorable and $M-\{v\}$ is 4-colorable for any vertex $v\in m\mathcal{N}4$. For 2-connected, no vertices in $M$ are degree $1$. Suppose there is a vertex $v\in m\mathcal{N}4$ with $\deg(v)=2$ or $\deg(v)=3$ then the 4-colorability of $M-\{v\}$ will make $M$ 4-colorable.

Suppose there is a vertex $v\in M$ with $\deg(v)=4$. Let $v_1, v_2, v_3, v_4$ are four neighbors of $v$ clockwise around. Due to the fact that $M$ is non-4-colorable and $M-\{v\}$ is 4-colorable, we shall have a 4-coloring function $f:V(M-\{v\})\rightarrow \{1,2,3,4\}$ such that $f(v_i)=i$ for $i=1,2,3,4$ and then $f(v)=5$ is inevitable. In $M-\{v\}$, let us finish the corresponding RGB-edge-coloring. Between $v_1$ and $v_3$ or between $v_2$ and $v_4$, there is ``at less one'' pair which is red-edge-disconnected, because a pair being red-edge-connected (or simply red-connected) will block another pair's connection. Without loss of generality, suppose  $v_2$ and $v_4$ are red-disconnected. Now by switch colors 2 and 4 over the red-connected component containing $v_4$, we get a new a 4-coloring function $f':V(M-\{v\})\rightarrow \{1,2,3,4\}$ that keeps the same colors on $v_1, v_2, v_3$ as $f$, but $f'(v_4)=2$. Finally by assigning $f'(v)=4$, we reach $f'$ a 4-coloring of $M$ and a contradiction. Now we guarantee that $\deg(v)\ge 5$ for every $v\in M$.   
   \end{proof}
   \begin{remark}
Later we will show that ``at less one'' in this proof is actually ``exactly one'' if we deal with triangulated planar graphs which is the next topic.  
   \end{remark}

\section{Maximal planar graphs and $e\mathcal{MPGN}4$}
\label{sec:eMPGN4}

We introduce triangulated graphs or MPG's, the abbreviation standing for maximal planar graphs, by offering two equivalent definitions:
   \begin{itemize}
   \item A planar graph $G$ is said to be triangulated (also called maximal planar) if the addition of any edge between two non-adjacent vertices of $G$ results in a non-planar graph.
   \item (A much simple definition)  As a planar graph of $G$, all facets of $G$ are triangles, including the outer one.
   \end{itemize}
   
Then the following property is a good reason for us to narrow down our target graphs.   
   
   \begin{theorem}     \label{thm:4colorMPG}
All planar graphs are 4-colorable if and only if all MPG's are 4-colorable.   
   \end{theorem}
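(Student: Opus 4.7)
The forward direction is immediate: since every MPG is in particular a planar graph, the assumption that all planar graphs are 4-colorable trivially implies the same for MPG's. So the real content is the converse, and my plan is to argue it by embedding an arbitrary planar graph into an MPG on the same vertex set.

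Given any simple connected planar graph $G$, fix a planar embedding of $G$. If $G$ is not already maximal, then by the first definition in the excerpt there exist two non-adjacent vertices that can be joined by an edge while preserving planarity; equivalently, by the second definition, some facet of $G$ is not a triangle. I would iteratively add edges across non-triangular facets, each time choosing a diagonal that (i) lies inside the chosen facet and (ii) joins two vertices not already connected by an edge of $G$, so that the resulting graph remains simple and planar. Since each added edge strictly increases the edge count while keeping the graph planar (and the number of edges of a simple planar graph on $n$ vertices is bounded by $3n-6$), this process terminates in an MPG $G^{\ast}$ with $V(G^{\ast})=V(G)$ and $E(G)\subseteq E(G^{\ast})$.

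By hypothesis $G^{\ast}$ admits a proper 4-coloring $f\colon V(G^{\ast})\to\{1,2,3,4\}$. Restricting $f$ to $V(G)$ yields a function on $V(G)$, and for every edge $uv\in E(G)\subseteq E(G^{\ast})$ we have $f(u)\ne f(v)$; hence the restriction is a proper 4-coloring of $G$. This completes the converse.

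The main obstacle I anticipate is the triangulation step: when a facet is bounded by a closed walk that repeats vertices, or when the two candidate diagonals of a quadrilateral facet are both already present elsewhere in the embedding, one cannot naively insert a diagonal without creating a multi-edge or crossing an existing edge. The standard workaround, which I would invoke, is to triangulate facets in an order that respects the embedding and, if necessary, to subdivide a troublesome facet by inserting an auxiliary diagonal on the other pair of vertices (in a quadrilateral at most one of the two diagonals can be blocked in this way, since the blocking edge would otherwise force the facet to already be a triangle). Very small graphs with fewer than three vertices are not MPG's under the triangle-facet definition, but they are trivially 4-colorable and can be handled as a separate base case.
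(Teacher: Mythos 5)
Your proposal is correct and follows essentially the same route as the paper: the forward direction is noted as trivial, and the converse adds edges to complete $G$ to an MPG $G^{\ast}$, 4-colors $G^{\ast}$ by hypothesis, and restricts the coloring back to $G$. The only difference is that you carefully justify the triangulation step (which the paper simply invokes as the well-known fact that any simple planar graph can be triangulated while remaining simple), including the correct observation that in a quadrilateral facet at most one diagonal can be blocked.
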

   \begin{proof}\
[$\Rightarrow$]: This direction is trivial.\newline
[$\Leftarrow$]: Given a planar graph $G$, we add some edges to make it an MPG, say $H$.  As an MPG, $H$ is 4-colorable, then by removing these “some edges” we turn back to G and it adopts the 4-coloring of $H$.
   \end{proof}

The purpose is to prove the Four Color Theorem; but assuming $e\mathcal{N}4$, $m\mathcal{N}4$ and $\mathcal{N}4$ non-empty (especially the first one) is a regular way of proving by contradiction. Let us denote a new set:
   \begin{eqnarray*}
   e\mathcal{MPGN}4 &=& \{E \in\mathcal{N}4 \mid \text{ $E$ is an MPG and a minimum w.r.t.}\\ & & \text{ the number of vertices.}\}    
   \end{eqnarray*}
\noindent
Compared with $e\mathcal{N}4$, the new set $e\mathcal{MPGN}4$ is better to deal with. The fundamental relation between  $e\mathcal{MPGN}$ and $e\mathcal{N}4$ is a well-known result. For self-sustaining of this paper, we will redo the proof in the rest of this section.

   \begin{quote}
To prove the Four Color Theorem by contrapositive, in the whole paper and our study in future, we always assume $e\mathcal{MPGN}4$ non-empty and discuss many different kinds of situations with much more details set up for an $EP\in e\mathcal{MPGN}4$. For instance, in Part III of this paper, we exam the situation that two degree 5 vertices are neighbors in $EP$.  
   \end{quote}

A well-known fact: any simple planar graph can be triangulated and still simple. It can be proved by induction. Given any planar graph $G$, let $\hat{G}_i$ be one of triangulated planar graph of $G$ by linking edges for some non-adjacent pairs of vertices along $n$-gons ($n>3$). Let  $\hat{e\mathcal{N}4}:= \{ \hat{G}_i \mid G\in e\mathcal{N}4, i=1,2,\ldots \}$. Because $e\mathcal{N}4$ consists of minimum graphs  $G\in m\mathcal{N}4$ w.r.t.\ the number of vertices, the following lemma is trivial. 

   \begin{lemma} \label{thm:eN4}
We have $\hat{e\mathcal{N}4} \subseteq e\mathcal{N}4$, and $\hat{e\mathcal{N}4}$ is non-empty if and only if $e\mathcal{N}4$ is non-empty.  
   \end{lemma}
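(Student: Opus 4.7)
The plan is to verify both claims directly using Theorem~\ref{thm:V5more}(a) and the standard fact, recalled in the paragraph preceding the lemma, that any simple planar graph admits at least one simple triangulation. For the containment, fix $G\in e\mathcal{N}4$ and any triangulation $\hat{G}_i$. Triangulation only inserts edges inside non-triangular faces, so $V(\hat{G}_i)=V(G)$, and in particular $|V(\hat{G}_i)|=|V(G)|$ equals the minimum vertex count realized in $m\mathcal{N}4$ (and hence in $\mathcal{N}4$). Since $G\subseteq \hat{G}_i$, Theorem~\ref{thm:V5more}(a) gives that $\hat{G}_i$ is non-4-colorable, so $\hat{G}_i\in\mathcal{N}4$. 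For every $v\in V(\hat{G}_i)$ the graph $\hat{G}_i-\{v\}$ has strictly fewer vertices than the minimum count in $\mathcal{N}4$, so $\hat{G}_i-\{v\}$ must be 4-colorable. This vertex-critical property, combined with the fact that $\hat{G}_i$ already realizes the minimum vertex count, places $\hat{G}_i$ in $e\mathcal{N}4$, yielding $\hat{e\mathcal{N}4}\subseteq e\mathcal{N}4$.

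The biconditional then comes essentially for free. If $e\mathcal{N}4=\emptyset$, the defining set $\{\hat{G}_i : G\in e\mathcal{N}4,\ i\ge 1\}$ is empty by inspection; conversely, if some $G$ lies in $e\mathcal{N}4$, the simple-triangulation fact produces at least one $\hat{G}_i$, so $\hat{e\mathcal{N}4}\neq\emptyset$. (Alternatively, the reverse implication is immediate from the containment just proved.)

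The only step calling for interpretive care, rather than genuine work, is the meaning of ``minimal'' in the definition of $m\mathcal{N}4$. Read literally as subgraph inclusion on both vertices and edges, $\hat{G}_i$ would contain the strictly smaller $G\in\mathcal{N}4$ and so violate minimality unless $\hat{G}_i=G$; read as vertex-criticality (the sense actually used in the proof of Theorem~\ref{thm:V5more}(b), when the author writes ``$M-\{v\}$ is 4-colorable''), the argument above goes through cleanly and the lemma is indeed trivial, matching the author's own remark. I would therefore write the proof under the vertex-critical reading, which is the only sense compatible with the conclusion of the lemma and with the way Section~\ref{sec:eN4} is used subsequently.
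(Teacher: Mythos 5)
Your proof is correct, and in substance it is the argument the paper intends: the paper gives no proof at all, merely asserting before the statement that the lemma is ``trivial'' because $e\mathcal{N}4$ consists of vertex-minimum members of $m\mathcal{N}4$, and your three steps (a triangulation $\hat{G}_i$ keeps the vertex set of $G$, Theorem~\ref{thm:V5more}(a) makes $\hat{G}_i$ non-4-colorable, and the minimality of the vertex count forces every $\hat{G}_i-\{v\}$ to be 4-colorable) are exactly the filling-in of that assertion. What you add beyond the paper is the closing caveat, and it is a genuine mathematical point rather than mere ``interpretive care'': under the literal definition of $m\mathcal{N}4$ as minimality with respect to subgraph inclusion, the lemma is \emph{false} whenever some $G\in e\mathcal{N}4$ is not already an MPG, since then $\hat{G}_i\supsetneq G$ properly contains a member of $\mathcal{N}4$ and cannot be inclusion-minimal. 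One must therefore either read $m\mathcal{N}4$ via vertex-criticality, as you do, or separately prove that every vertex-minimum member of $\mathcal{N}4$ is already triangulated (e.g.\ by the classical device of identifying two nonadjacent vertices on a non-triangular face, which lies outside the paper's edge-adding toolkit); the paper does neither. Be aware, though, that your evidence for the vertex-critical reading is weaker than you claim: in the proof of Theorem~\ref{thm:V5more}(b), the statement that $M-\{v\}$ is 4-colorable follows equally well from inclusion-minimality, so that passage does not disambiguate the definition. The same wrinkle silently recurs in the proof of Theorem~\ref{thm:eMPG4}(a), where membership in $e\mathcal{N}4$ is again inferred from equality of vertex counts alone, so your reading is the one under which the lemma and its later uses in Section~\ref{sec:eMPGN4} all come out true.
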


   \begin{theorem} \label{thm:eMPG4}
{\rm (a)} $e\mathcal{MPGN}4 = \hat{e\mathcal{N}4} = e\mathcal{N}4$.\ \ {\rm (b)} If  $EP\in e\mathcal{MPGN}4$ then any non-trivial subgraph of $EP$ is 4-colorable. Also any MPG graph $G$ with $|G|<|EP|$  is 4-colorable.
   \end{theorem}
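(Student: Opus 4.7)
\medskip
\noindent\textbf{Proof plan for Theorem~\ref{thm:eMPG4}.}
The strategy for part~(a) is to set up the chain via mutual inclusions, so I will verify in turn the pieces $\hat{e\mathcal{N}4}\subseteq e\mathcal{MPGN}4$, $e\mathcal{MPGN}4\subseteq \hat{e\mathcal{N}4}$, and $e\mathcal{N}4\subseteq\hat{e\mathcal{N}4}$, combining them with Lemma~\ref{thm:eN4} ($\hat{e\mathcal{N}4}\subseteq e\mathcal{N}4$). The crucial preliminary observation is that the minimum vertex count $n_0$ among graphs of $m\mathcal{N}4$ coincides with the minimum vertex count among non-4-colorable MPG's. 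One direction comes from triangulating any $G\in e\mathcal{N}4$: by the ``any simple planar graph can be triangulated'' fact cited just before Lemma~\ref{thm:eN4}, some $\hat G$ is an MPG of order $|G|=n_0$, and by Theorem~\ref{thm:V5more}(a) it is non-4-colorable. The other direction uses the fact that any non-4-colorable MPG contains a minimal non-4-colorable subgraph, which lies in $m\mathcal{N}4$ and has no greater order.

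Given this coincidence, the first inclusion $\hat{e\mathcal{N}4}\subseteq e\mathcal{MPGN}4$ is immediate: a triangulation $\hat G$ of $G\in e\mathcal{N}4$ is an MPG, is non-4-colorable (Theorem~\ref{thm:V5more}(a)), and has the minimum order $n_0$ among non-4-colorable MPG's. For $e\mathcal{MPGN}4\subseteq\hat{e\mathcal{N}4}$, given $E\in e\mathcal{MPGN}4$, pick a minimal non-4-colorable subgraph $E'\subseteq E$; by the minima coincidence $|E'|=|E|$, so $E'$ is spanning, $E'\in e\mathcal{N}4$, and $E$ (an MPG containing $E'$) serves as a legitimate triangulation, placing $E$ in $\hat{e\mathcal{N}4}$. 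Feeding these through Lemma~\ref{thm:eN4} yields $e\mathcal{MPGN}4=\hat{e\mathcal{N}4}\subseteq e\mathcal{N}4$. For the reverse $e\mathcal{N}4\subseteq \hat{e\mathcal{N}4}$, I claim every $G\in e\mathcal{N}4$ is itself an MPG: if not, any triangulation $\hat G$ lies in $\hat{e\mathcal{N}4}\subseteq e\mathcal{N}4\subseteq m\mathcal{N}4$, yet $G\subsetneq \hat G$ with $G$ non-4-colorable would exhibit a proper non-4-colorable subgraph of $\hat G$, contradicting $\hat G\in m\mathcal{N}4$. Hence $G=\hat G\in\hat{e\mathcal{N}4}$ and equality throughout is secured.

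For part~(b), the statement that every MPG $G$ with $|G|<|EP|$ is 4-colorable follows directly from $EP$ being a vertex-minimum element of $e\mathcal{MPGN}4$: such a $G$ cannot belong to $\mathcal{N}4$. For the non-trivial proper subgraph $H\subsetneq EP$, suppose toward contradiction that $H\in\mathcal{N}4$. Then $H$ contains a minimal non-4-colorable subgraph $H'\in m\mathcal{N}4$ with $|H'|\leq |H|\leq |EP|$. By part~(a), $|EP|$ is the minimum vertex count in $m\mathcal{N}4$, forcing $|H'|=|EP|$, so $H'$ spans $EP$, and by part~(a) applied to $H'\in e\mathcal{N}4=e\mathcal{MPGN}4$, $H'$ is itself an MPG. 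But then $H'$ already has the maximum planar edge count $3|EP|-6$ on its vertex set, while $H'\subseteq H\subsetneq EP$ must have strictly fewer edges than $EP$, a contradiction.

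The main obstacle in the whole argument is the inclusion $e\mathcal{N}4\subseteq \hat{e\mathcal{N}4}$, which amounts to showing that every vertex-minimum minimal non-4-colorable graph is \emph{already} triangulated. The delicate point is that this does not follow from the minimality clauses alone --- it genuinely leverages Lemma~\ref{thm:eN4} together with the $m\mathcal{N}4$ minimality of $\hat G$ to squeeze $G=\hat G$. Once this is in place, the triple equality and both statements in~(b) fall out cleanly.
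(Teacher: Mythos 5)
Your proof is correct and takes essentially the same route as the paper's: triangulate a vertex-minimum element of $e\mathcal{N}4$, compare minimum orders to force $|EP|=|G|$, and use the $m\mathcal{N}4$-minimality of $\hat{G}$ to squeeze $G=\hat{G}$ --- your version merely makes explicit the mutual inclusions that the paper compresses. The one divergence is in (b), where your edge-counting contradiction ($H'$ would be an MPG on $V(EP)$ with $3|EP|-6$ edges sitting inside a proper subgraph of $EP$) is valid but unnecessary: the paper gets the subgraph claim in one line from $EP\in e\mathcal{MPGN}4\subseteq e\mathcal{N}4\subseteq m\mathcal{N}4$, since minimality in the subgraph order already forces every non-trivial subgraph to be 4-colorable.
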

   \begin{proof}   
If $e\mathcal{N}4$ is empty then all three sets are empty; therefore (a) holds and (b) has nothing to check. 

Suppose $e\mathcal{N}4$ non-empty. Let $G\in e\mathcal{N}4$ and then we choose one $\hat{G}\in \hat{e\mathcal{N}4}$. 
Notice that $e\mathcal{MPGN}4$ is now non-empty because of $\hat{G}$. Let $EP\in e\mathcal{MPGN}4$. By definition, we relation on the order (the number of vertices) of these three graphs as $|G|=|\hat{G}|\ge |EP|$.
  
On the other hand, we know the fact $e\mathcal{MPGN}4 \subseteq \mathcal{N}4$, and let us consider the minimum orders of both sides. So we have $|EP|\ge |G'|$ for any $G'\in e\mathcal{N}4$. Finally we obtain $|EP|= |G|$ for any $G\in e\mathcal{N}4$, and  we conclude that  $e\mathcal{MPGN}4 =\hat{e\mathcal{N}4}  \subseteq e\mathcal{N}4$. 

Furthermore, any $EP\in e\mathcal{MPGN}4  \subseteq e\mathcal{N}4 \subseteq m\mathcal{N}4$ obeys a property of $m\mathcal{N}4$: any non-trivial subgraph of $EP$, particularly removing some edges from $EP$, must be 4-colorable. Therefore, we conclude (b) and  $e\mathcal{MPGN}4 = e\mathcal{N}4$.   
   \end{proof}  
   
   \begin{corollary} \label{thm:V5more2}
Let $EP\in e\mathcal{MPGN}4$. (a) Any supergraph of $EP$ is non-4-colorable and any non-trivial subgraph of $EP$ is 4-colorable. (b) Any vertex $v\in EP$ must have $\deg(v)\ge 5$.   
   \end{corollary}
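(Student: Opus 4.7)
The plan is to recognize this corollary as an immediate consequence of Theorem~\ref{thm:V5more} together with Theorem~\ref{thm:eMPG4}(a), which establishes the chain of inclusions $e\mathcal{MPGN}4 = e\mathcal{N}4 \subseteq m\mathcal{N}4$. Every property that was proved for elements of $m\mathcal{N}4$ is therefore inherited by every $EP\in e\mathcal{MPGN}4$, and essentially no new argument is required.

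For part~(a), the subgraph claim is literally part~(b) of Theorem~\ref{thm:eMPG4}: any non-trivial subgraph of $EP\in e\mathcal{MPGN}4$ is $4$-colorable. The supergraph claim repeats the one-line argument used in the proof of Theorem~\ref{thm:V5more}(a): if some supergraph $EP'\supseteq EP$ admitted a proper $4$-coloring $f'$, then restricting $f'$ to $V(EP)$ would yield a proper $4$-coloring of $EP$, contradicting $EP\in\mathcal{N}4$. Hence $\chi(EP')>4$.

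For part~(b), since $EP\in e\mathcal{MPGN}4 \subseteq m\mathcal{N}4$ by Theorem~\ref{thm:eMPG4}(a), Theorem~\ref{thm:V5more}(b) directly gives $\deg(v)\ge 5$ for every $v\in EP$; the Kempe-chain argument carried out there for general $M\in m\mathcal{N}4$ applies verbatim.

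No step is a real obstacle: the whole point of isolating Theorem~\ref{thm:eMPG4}(a) was to identify $e\mathcal{N}4$ with the more convenient MPG setting, and the present corollary is simply the bookkeeping that makes this identification explicit. The only forward-looking remark worth making is the one already flagged after Theorem~\ref{thm:V5more}: within the triangulated setting one expects to sharpen the Kempe-chain dichotomy so that \emph{exactly one} of the diagonal pairs $\{v_1,v_3\}$, $\{v_2,v_4\}$ is red-disconnected, but that refinement belongs to the later sections on RGB-tilings and is not needed to obtain the $\deg\ge 5$ bound stated here.
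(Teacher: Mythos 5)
Your proposal is correct and matches the paper's intent exactly: the paper states this corollary without proof precisely because, via Theorem~\ref{thm:eMPG4}(a) and the inclusion $e\mathcal{MPGN}4 = e\mathcal{N}4 \subseteq m\mathcal{N}4$, it inherits both parts directly from Theorem~\ref{thm:V5more} (with the subgraph claim also restated in Theorem~\ref{thm:eMPG4}(b)). Your bookkeeping of which inclusion delivers which part is accurate, and no further argument is needed.
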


We would like to show another poof for the part (b) of Theorem~\ref{thm:eMPG4}. We need a trivial lemma as follows about non-trivial 3-cycles. Also, we will offer another interesting theorem about non-trivial 4-cycles in Part II of this paper.

   \begin{lemma}   \label{thm:nontrivial3}
There is no non-trivial 3-cycle in $EP$, i.e., every 3-cycle in $EP$ forms a 3-facet.  
   \end{lemma}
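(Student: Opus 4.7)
The plan is a standard separation-and-glue argument tailored to the hypothesis $EP\in e\mathcal{MPGN}4$. Assume, toward a contradiction, that $EP$ contains a non-trivial 3-cycle $C=v_1v_2v_3$; ``non-trivial'' meaning that $C$ does not bound any facet of $EP$, so both the open interior and the open exterior of $C$ (in the planar embedding) contain at least one vertex of $EP$.

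First I would use the Jordan curve nature of $C$ to split $EP$ along $C$. Let $G_{\mathrm{in}}$ be the subgraph induced by $C$ together with all vertices lying in the closed interior of $C$, and $G_{\mathrm{out}}$ the subgraph induced by $C$ together with all vertices lying in the closed exterior. By construction $V(G_{\mathrm{in}})\cap V(G_{\mathrm{out}})=\{v_1,v_2,v_3\}$ and $E(G_{\mathrm{in}})\cap E(G_{\mathrm{out}})=\{v_1v_2,v_2v_3,v_1v_3\}$, while $G_{\mathrm{in}}\cup G_{\mathrm{out}}=EP$. The non-triviality of $C$ forces $|V(G_{\mathrm{in}})|<|V(EP)|$ and $|V(G_{\mathrm{out}})|<|V(EP)|$, so each of $G_{\mathrm{in}}$, $G_{\mathrm{out}}$ is a non-trivial subgraph of $EP$.

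Next, apply Corollary~\ref{thm:V5more2}(a) (equivalently Theorem~\ref{thm:eMPG4}(b)): both $G_{\mathrm{in}}$ and $G_{\mathrm{out}}$ admit proper 4-colorings $f_{\mathrm{in}}$ and $f_{\mathrm{out}}$. On the triangle $C$ each coloring uses three distinct values from $\{1,2,3,4\}$. By applying a suitable permutation of $\{1,2,3,4\}$ to $f_{\mathrm{out}}$ (which yields another proper 4-coloring of $G_{\mathrm{out}}$), I can arrange that $f_{\mathrm{in}}(v_i)=f_{\mathrm{out}}(v_i)$ for $i=1,2,3$. The two colorings then agree on the common vertex set $V(G_{\mathrm{in}})\cap V(G_{\mathrm{out}})$, so the piecewise function $f\colon V(EP)\to\{1,2,3,4\}$ defined by $f=f_{\mathrm{in}}$ on $V(G_{\mathrm{in}})$ and $f=f_{\mathrm{out}}$ on $V(G_{\mathrm{out}})$ is well-defined. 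Every edge of $EP$ lies in $G_{\mathrm{in}}$ or in $G_{\mathrm{out}}$, so $f$ is proper on $EP$. This contradicts the non-4-colorability of $EP$, and the assumption fails.

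The only place that needs care is the claim that one really can split $EP$ cleanly along $C$: one must invoke the planarity of $EP$ and the Jordan curve theorem to guarantee that no edge of $EP$ crosses $C$, so that $G_{\mathrm{in}}$ and $G_{\mathrm{out}}$ share exactly the three vertices and three edges of $C$. That is the only conceptually non-routine ingredient; the color-matching permutation and the gluing are immediate because $C$ is a triangle and hence, in any proper coloring, receives three distinct colors that can be mapped to any prescribed triple by a permutation of $\{1,2,3,4\}$.
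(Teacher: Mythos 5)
Your proof is correct and takes essentially the same route as the paper's: split $EP$ along the non-trivial 3-cycle into the inner and outer pieces meeting exactly in the triangle, 4-color each piece using minimality of $EP$ (Theorem~\ref{thm:eMPG4}(b)), permute the colors of one coloring so the two agree on the three shared vertices, and glue to contradict non-4-colorability. Your explicit permutation of $\{1,2,3,4\}$ simply makes precise what the paper calls ``vertex-color-switching,'' so there is no substantive difference.
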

   \begin{proof}
Suppose there is a non-trivial 3-cycle $\Omega:=a$-$b$-$c$-$a$ in $EP$. By $\Omega$, our $EP$ is separated into two non-empty regions: $\Sigma$ (inside) and $\Sigma'$ (outside) with  $\Sigma\cap\Sigma'=\Omega$. Both $\Sigma$ and $\Sigma'$ are still two MPG's with $|\Sigma|, |\Sigma'|\ge 4$. Also $|\Sigma|$ and $|\Sigma'|$ are less than $|EP|$, thus $\Sigma$ and $\Sigma'$ are 4-colorable with coloring map $f$ and $f'$. Because the intersection $\Sigma\cap\Sigma'$ consists of only three vertices, we can  manipulate vertex-color-switching on $f'$ to get $f(x)=f'(x)$ for $x=a,b,c$. Now $f$ and $f'$ together offer a 4-coloring map of $EP$, and this is a contradiction. 
   \end{proof}
\noindent

   \begin{proof} (Another proof for Theorem~\ref{thm:eMPG4}(b)) A maximal non-trivial subgraph of $EP$ is obtained by removing any edge, say $uv$ in $EP$. Once we show every maximal non-trivial subgraph of $EP$ is 4-colorable, then all other subgraphs of $EP$ are 4-colorable. Let $\deg(u)=k\ge 5$ (by Theorem~\ref{thm:V5more}) with the neighbors $v=v_1, v_2,\ldots, v_k$ clockwise around $u$. Now we form a new MPG, say $G$, from $EP$ by removing vertex $u$ and connecting new edges $vv_3, vv_4,\ldots, vv_{k-1}$. Notice that the edge $vv_i$ for $i=3,4\ldots,k-1$ is valid if $v$ and $v_i$ are not adjacent in $EP-\{u\}$. What happens if $v$ and $v_i$ are adjacent? It is impossible; otherwise we would see a non-trivial 3-cycle $v$-$v_i$-$u$-$v$ in $EP$, and this violates Lemma~\ref{thm:nontrivial3}. 
   
With these new edges, we obtain a new MPG, say $G$, whose order is less than $EP$. So $G$ has a 4-coloring map $f$ due to the definition of $e\mathcal{MPGN}4$. For the maximal nontrivial $EP-\{uv\}$, we need to keep same coloring for the vertices in $G$  and assign $f(u)=f(v)$. This $f$ is definitely a 4-colorable map for $EP-\{uv\}$.  
    \end{proof}
    \begin{remark}[Very important]
Before link a new edge between  $v$ and $v_i$, we must make sure that $v$ and $v_i$ are not adjacent in the current graph. 
    \end{remark}

\section{Tilings and 4-colorings}    \label{sec:RGB2}

A \emph{semi-MPG} is nearly an MPG but it has some facets, called \emph{outer facets}, which form the border of this  structure as a planar graph. Usually an outer facet is an $n$ sided polygon ($n$-gon) with $n\ge 4$. However, in some rare cases we allow $3$-gons to be outer facets if we really need them to play as the border of this structure. (See Sections~\ref{sec:RGB4coloring} and~\ref{sec:Grandline}.)

If there is a single outer facet of $n$ sided polygon, 
   \begin{figure}[h]
   \begin{center}
   \includegraphics[scale=1]{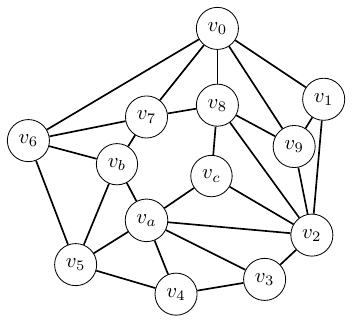}
   \end{center}
\caption{a $(5,7)$-semi-MPG}   
   \end{figure}
then we call it an $n$-semi-MPG. Generally an $(n_1,n_2,\ldots,n_k)$-semi-MPG has $k$ outer facets with sizes $n_1,n_2,\ldots,n_k$ respectively. Most of the time, we forbid any two outer facets sharing a same edge, because we need every edge must belong to one or two triangles. We do have exceptions in Lemma~\ref{thm:evenoddRGB}(a') and in some other place.

Particularly we call an MPG or an $n$-semi-MPG \emph{One Piece}, because all loops in these structures are free loops who can be shrunk into one point in topology. Of course, an $(n_1,n_2,\ldots,n_k)$-semi-MPG is not One Piece, unless we allow two outer facets sharing a same edge.  We draw a brief sketch:
   \begin{center}
   \begin{tabular}{rcl}
  & MPG & \multirow{2}{2.4cm}{{\huge \} }\quad One Piece}\\
\multirow{2}{2.5cm}{semi-MPG\quad {\huge \{ }} & $n$-semi-MPG\\  
    & $(n_1, n_2, \ldots n_k)$-semi-MPG   
   \end{tabular}
   \end{center}

The basic elements in an MPG or a semi-MPG are triangles. Two triangles sharing an edge is call a \emph{diamond}. In the following, the edge shared by two triangles is colored by red. We call it a \emph{red tile} or a \emph{red diamond}. We also call the left graph in Figure~\ref{fig:reddiamond} $v_2v_4$-diamond and the four surrounding edges of $v_2v_4$ are $v_1v_2$, $v_2v_3$, $v_3v_4$ and $v_4v_1$. If a triangle with only one red edge, then we call it a \emph{red half-tile} or a \emph{red triangle}. Of course, there are also green or blue tiles and half-tiles. For a fixed edge-color, there is exactly one edge of such color for every single triangle.  
\noindent
   \begin{figure}[h]
   \begin{center}
   \includegraphics{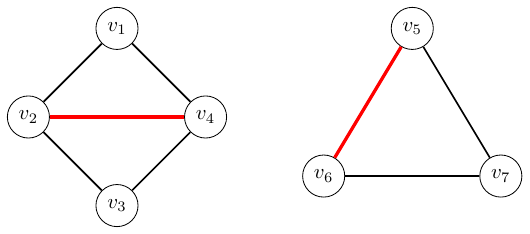}
   \end{center}
   \caption{A red tile $v_2v_4$-diamond and a red half-tile} \label{fig:reddiamond}
   \end{figure}

Let $G$ be an MPG or a semi-MPG. We try to edge-color exactly one of three edges of every triangle by red. We call the process and a possible result an \emph{R-tiling} on $G$. The Figure~\ref{fig:twoRtilings} in the following  
\noindent
   \begin{figure}[h]
   \begin{center}
   \begin{tabular}{ c c }
   \includegraphics[scale=0.9]{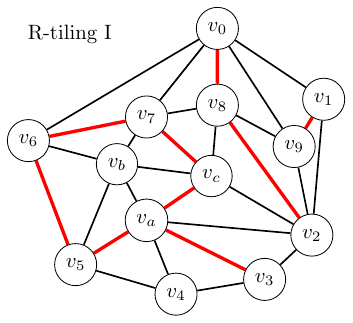}\
   \includegraphics[scale=0.9]{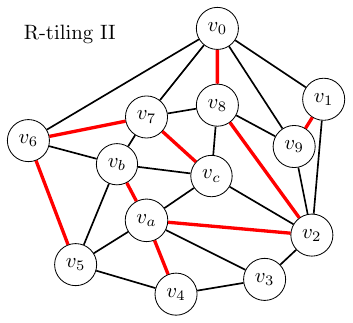}
   \end{tabular}
   \end{center}
   \caption{Two R-tilings; the right one induces a 4-coloring.}  \label{fig:twoRtilings}
   \end{figure}
shows R-tiling I and R-tiling II on a same graph $G$. 
   \begin{definition} \label{def:Rtiling}
An R-tiling is actually a function $T_r:E(G)\rightarrow \{\text{red, black}\}$, where $E(G)$ is the edge set of $G$, such that every triangle of $G$ has one one red and two black edges. 
   \end{definition}
Now we have the formal reason why we forbid any two facets of $G$ sharing a same edge, because to make tiling every edge must belong to one or two triangles. 

For short, we use r or R to stand for red, g or G for green, b or B for blue, and bl for black. Black color also means uncertain for R/G/B. Not just R-tilings, we work with G-tilings and B-tilings. If we only consider a single-color tiling, then R, G and B are independent. However, most of time we wish them would coexist. That means each triangle in an MPG or a semi-MPG shall have three different edge-colors: R, G and B. If three different color-tilings coexist, then we call this representation an \emph{RGB-tiling}, denoted  by $T_{rgb}:E(Q)\rightarrow \{\text{red, green, blue}\}$ and then no more black color for any edge.      
   
Notice that there is a red 5-cycle, namely $v_5$-$v_6$-$v_7$-$v_c$-$v_a$-$v_5$, in R-tiling I. That means no coexisting G-tiling as well as B-tiling inside this 5-cycle. Also notice that there no red odd-cycle in R-tiling II; thus a 4-coloring can be induced by R-tiling II. The rightmost below is an RGB-tiling which is an example to fulfill coexisting G-tiling and B-tiling w.r.t.\ R-tiling II. As for R-tiling I, it tells another story. Even though no fully 4-coloring of $G$ by using R-tiling I, 
\noindent
   \begin{figure}[h]
   \begin{center}
   \begin{tabular}{ c c }
   \includegraphics[scale=0.9]{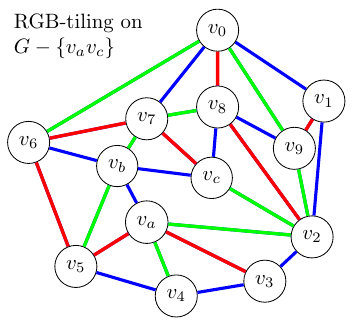}\
   \includegraphics[scale=0.9]{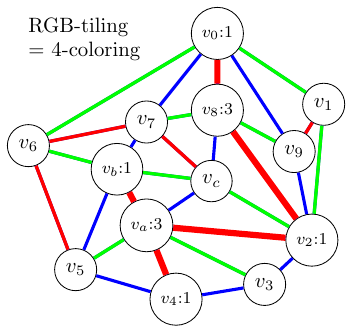}
   \end{tabular}
   \end{center}
   \caption{Two RGB-tilings; the right one induces a 4-coloring.} \label{fig:twoRGBtiling}
   \end{figure}
we may sacrifice some edges to reach a minor result. The leftmost graph is what we just said a minor result. It is an RGB-tiling on $G-\{v_av_c\}$. Because $v_av_c$ lays on that red 5-cycle, removing it makes the new $(4,7)$-semi-MPG 4-colorable generated by R-tiling I. This story has an interesting implication: Of course, if we return $v_av_c$ as a red edge, then the red 5-cycle is back. Amazingly, if we return $v_av_c$ as a green edge, then we create a green odd-cycle.  If we return $v_av_c$ as a blue edge, then we create a blue odd-cycle. This fact is very important.

Notice that tilings make sense only in an MPG or a semi-MPG. In a normal 4-colorable graph, we introduce RGB-edge-colorings in Section~\ref{sec:RGB1}. The concept of RGB-tilings is advanced compared with the one of RGB-edge-colorings when we deal with (nearly) triangulated graphs. So far we have not guaranteed that an MPG or a semi-MPG does have an R-tiling. We just showed some examples for R-tilings and induced possible RGB-tilings. The existence of R-tilings is a topic in the next section when we focus on $e\mathcal{MPGN}4$.

\section{RGB-tilings and 4-coloring on MPG's or $n$-semi-MPG's}    \label{sec:RGB4coloring}

Let $M$ be an $(n_1,n_2,\ldots,n_k)$-semi-MPG. It has $k$  \emph{outer facets} with sizes $n_i$, i.e., each outer facet as a piece of the border of this planar graph $M$ is an $n_i$ sided polygon ($n_i$-gon). Normally we have $n_i\ge 4$ because there are triangles nearly everywhere in $M$; however, \underline{3-gon outer facets are allowed} in this and the next sections if we do point them out precisely. As a 3-gon outer facet, it does not need to follow Definition~\ref{def:Rtiling} for any provided R-tiling $T_r$. Also notice that Lemma~\ref{thm:nontrivial3} guarantees no non-trivial 3-cycle in $EP$.  After Lemma~\ref{thm:evenoddRGB}, we allow \underline{two outer facets to share edges} in $M$ in this and the next sections. As for edge counting, those shared edges shall be count by multiplicity 2.

   \begin{remark}
In this paper, a non-trivial triangle in an MPG or in a semi-MPG is either a $3$-gon outer facet or a triangle with   vertices and edges both inside and outside. A trivial 4-cycle is defined that its four vertices are the surrounding four of a diamond. 
   \end{remark}

Even though we have not yet prove that an R-tiling must layout in $M$, we can show some properties under the assumption that an R-tiling exists. 
   \begin{lemma}  \label{thm:evenoddRGB}
Let $M$ be an $n$-/$(n_1,n_2,\ldots,n_k)$-semi-MPG for $n, n_i\ge 3$.
  \begin{itemize}
\item[(a)] If $M$ has an R-tiling, then the number of black edges along $\Omega(M)$ must be even. Of course, the rest edges along $\Omega(M)$ are red and each associates with a red half-tile.

\item[(a')] This is a supplemental item w.r.t.\ {\rm (a)} and here we allow two outer facets to share edges in $M$. The index $\sum_{i=1}^k n_i$ counts such shared edges by multiplicity 2. Also a single edge that is shared by two outer facets associates with no triangles; so their colors are free to choose from red and black. Now the result in item {\rm (a)} still works here.

\item[(b)] If $M$ has an RGB-tiling, then the three groups of edges along $\Omega(M)$ sorted by red, green and blue must be either all even or all odd in cardinality. Particularly, when $|\Omega(M)|$ is even then
they are all even, and when $|\Omega(M)|$ is odd then they are all odd.   
   \end{itemize}
   \end{lemma}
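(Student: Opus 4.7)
The plan is to prove all three parts by a single double-counting argument, applied to triangles and their edges.

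For part (a), let $T$ denote the set of (inner) triangles of $M$. By the definition of an R-tiling, each $\triangle\in T$ has exactly one red edge and two non-red (black) edges. Summing over all triangles gives $\sum_{\triangle\in T}\#\{\text{black edges of }\triangle\}=2|T|$, which is even. On the other hand, this sum counts each interior edge of $M$ twice (once for each of the two triangles sharing it) and each boundary edge exactly once. Therefore
\[
2|T| \;=\; 2\cdot(\text{\# interior black edges})+(\text{\# boundary black edges}),
\]
so the number of black edges along $\Omega(M)$ is even, and the remaining boundary edges are red, each sitting on a red half-tile.

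For part (a'), the only new feature is the edges shared by two outer facets. Such an edge belongs to no triangle, so it contributes nothing to the left-hand side of the identity above, and it is counted with multiplicity $2$ in $\Omega(M)$. Whether the author paints it red or black, its contribution to the boundary count is $0$ or $2$, which is even in either case. Hence it cannot change the parity of the total black-edge count along $\Omega(M)$, and the argument from (a) still yields an even total.

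For part (b), repeat the same double count once for each color. Since an RGB-tiling assigns exactly one red, one green, one blue edge to every triangle, $\sum_{\triangle\in T}\#\{\text{red edges of }\triangle\}=|T|$, and likewise for green and for blue. Double-counting through the edges gives
\[
|T| \;\equiv\; \#\{\text{boundary red edges}\} \pmod{2},
\]
and the same congruence for green and blue. Thus the three boundary color-counts share the common parity of $|T|$. Their sum equals $|\Omega(M)|$, so if $|\Omega(M)|$ is even the three counts are all even (three even summands) and if $|\Omega(M)|$ is odd they are all odd (three odd summands give an odd sum).

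No serious obstacle is expected; the only point requiring a little care is the bookkeeping in (a'), namely recognizing that the multiplicity-$2$ convention on shared edges is exactly what preserves the parity identity even when such edges are detached from any triangle and their colors are chosen freely.
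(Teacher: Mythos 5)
Your proof is correct, but it is organized differently from the paper's. For (a) the paper does not double-count triangle--edge incidences directly: it first completes every red boundary half-tile into a full red diamond by grafting a ``V'' of two extra black edges onto $\Omega(M)$, obtaining an auxiliary semi-MPG $\hat{M}$ tiled entirely by red diamonds with an all-black boundary, reads the parity off the identity $|\Omega(\hat{M})|=4\#(\text{red diamonds})-2\#(\text{black edges inside }\hat{M})$, and then peels the V-shapes off again. Your count $2|T|=2\,\#(\text{interior black edges})+\#(\text{boundary black edges})$ is the same parity bookkeeping, but organized over triangles rather than diamonds; it buys you a shorter argument with no auxiliary construction to build and undo, and no need to check that grafting V-shapes onto the outer facets behaves well. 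The two treatments of (a') agree in substance: the paper deletes the shared edges and invokes (a), while you observe that each shared edge contributes $0$ or $2$ to the multiplicity-$2$ boundary count, hence is parity-neutral. For (b) the paper recolors green and blue as black and quotes (a) three times, which yields that the pairwise sums $g+b$, $r+b$, $r+g$ of boundary color-counts are even and hence $r\equiv g\equiv b\pmod 2$; your one-red-edge-per-triangle count gives the slightly sharper congruence $r\equiv g\equiv b\equiv |T|\pmod 2$, identifying the common parity as that of the number of inner triangles, a small bonus the paper's reduction leaves implicit. Both arguments then finish identically from $r+g+b=|\Omega(M)|$.
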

   \begin{proof}
(a): First we shall deal with those red edges along $\Omega(M)$. Such an edge $e$ is a red half-tile. Let us add up to two extra adjacent black edges (with ``V'' shape) to complete a red $e$-diamond.  Now we see a new semi-MPG, say $\hat{M}$, and a new R-tiling that is made by red diamonds, where all edges along $\Omega(\hat{M})$ are black. A simple equation 
   $$|\Omega(\hat{M})| =4\#(\text{red diamonds})-2\#(\text{black edges inside $\hat{M}$})$$
verifies that $|\Omega(\hat{M})|$ is even. Now removing all ``V'' shapes from $\Omega(\hat{M})$, we prove that the number of black edges along $\Omega(M)$ must be even.

(a'): For this supplemental item, we simply remove all edges that are shared by two outer facet and get a new semi-MPG $\bar{M}$. Apply (a) on $\bar{M}$ and then the number of black edges along $\Omega(\bar{M})$ must be even. How about the removed edges originally along the outer facets of $M$? Because the counting index $\sum_{i=1}^k n_i$ counts those edges shared by two outer facets with multiplicity 2, and so do the counting for those black edges shared by two outer facets of $M$. The proof of this supplemental item is complete.

(b): The concept is easy. In an RGB-tiling which made by coexisting R-tiling, G-tiling and B-tiling. In view of R-tiling, green and blue edges are treated as black. So, this property is a corollary of (a).     
   \end{proof}

   \begin{remark}  \label{re:SupplementalItem}
So far we have no supplemental item (b') for Theorem~\ref{thm:evenoddRGB}. Those edges shared by two outer facets have no rule to color R, G and B in order to reach an ``good'' RGB-tiling. We could modify the definition of an RGB-tiling by additionally requiring that the numbers of red, green and blue must be either all even or all odd along \underline{each} outer facet. With this new definition, removing some shared edges to modify outer facets or changing colors on some shared edges will still keep all-even/all-odd property; because any shared edge is counted by multiplicity 2. Wait! do we really need this stronger definition? Or actually all even or all odd property on $\Omega(M)$ is always true like Lemma~\ref{thm:evenoddRGB}(a'). For this ``stronger definition'', please refer to Theorem~\ref{thm:4RGBtilingGeneralization}; as for a new supplemental item (b'), please see the comming  Corollary~\ref{thm:evenoddRGB2}. Also Remark~\ref{re:CSigma} offers a more advance point of view.  
   \end{remark}

   \begin{corollary}  \label{thm:evenoddRGB2}
Let $M$ be an $n$-/$(n_1,n_2,\ldots,n_k)$-semi-MPG for $n, n_i\ge 3$ with an RGB-tiling $T_{rgb}$. Here we allow two outer facets to share edges in $M$. Along $\Omega(M)$, the three numbers of red, green blue edges are all even if $|\Omega(M)|$ is even; all odd if $|\Omega(M)|$ is odd. Notice that any shared edge is counted by multiplicity 2. 
   \end{corollary}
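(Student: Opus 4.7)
The plan is to mimic the proof of Lemma~\ref{thm:evenoddRGB}(b), replacing each invocation of item~(a) with item~(a'). An RGB-tiling $T_{rgb}$ on $M$ decomposes into three simultaneous single-color tilings: an R-tiling $T_r$ obtained by relabeling green and blue edges as black, and analogously defined $T_g$ and $T_b$. Each $T_c$ satisfies Definition~\ref{def:Rtiling} because every triangle of $M$ already carries exactly one red, one green, and one blue edge under $T_{rgb}$, and the ``shared outer-facet edges permitted'' setting carries over unchanged from $T_{rgb}$ to $T_c$.

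Next I would apply Lemma~\ref{thm:evenoddRGB}(a') separately to $T_r$, $T_g$, and $T_b$. For each color $c \in \{r,g,b\}$, the conclusion is that the number of non-$c$ edges along $\Omega(M)$, counted with multiplicity $2$ on any edge shared by two outer facets, is even. Writing $N_c$ for the corresponding count of $c$-colored edges under the same multiplicity convention, the identity $N_c + (\text{non-}c\text{ count}) = |\Omega(M)|$ together with the evenness of the complementary count forces $N_c \equiv |\Omega(M)| \pmod 2$. Applying this for $c = r, g, b$ yields that $N_r, N_g, N_b$ all share the parity of $|\Omega(M)|$, which is precisely the all-even/all-odd dichotomy in the statement.

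I do not foresee a real obstacle; the corollary is essentially a three-fold repetition of~(a'). The only point worth verifying is the self-consistency of the multiplicity-$2$ bookkeeping across the three applications: a shared edge lies in no triangle, so its RGB color is unconstrained, and its contribution of $0 \pmod 2$ to each per-color count under the multiplicity-$2$ rule is independent of how $T_{rgb}$ actually paints it. Hence the three single-color reductions do not interact, and Corollary~\ref{thm:evenoddRGB2} follows at once from Lemma~\ref{thm:evenoddRGB}(a').
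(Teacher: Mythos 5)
Your proposal is correct and is essentially the paper's own argument: the paper simply deletes the shared edges, applies Lemma~\ref{thm:evenoddRGB}(b) to the resulting shared-edge-free semi-MPG, and observes that the multiplicity-2 convention makes the colors painted on shared edges irrelevant to all three parities. Your variant---invoking (a') three times, once per color, and then using the complement identity $N_c \equiv |\Omega(M)| \pmod 2$---is the same parity bookkeeping in a slightly different order, since the paper's item (b) is itself derived from (a) by exactly the color-collapsing reduction you perform, and (a') already internalizes the shared-edge deletion that the paper's corollary proof does by hand.
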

   \begin{proof}
We simply remove those shared edges and obtain a new semi-MPG without any shared edge. Then apply Lemma~\ref{thm:evenoddRGB}(b). It does not matter what colors are on those shared edges, because they shall be counted by multiplicity 2.   
   \end{proof}
   
   \begin{remark} \label{re:CSigma}
Let $M$ be an MPG or an $n$-/$(n_1,n_2,\ldots,n_k)$-semi-MPG for $n, n_i\ge 3$.  Here we allow two outer facets to share edges in $M$. Let $C$ be any cycle in $M$ that might pass through some edges along $\Omega(M)$ except those shared edges. Let $\Sigma^+$ ($\Sigma^-$) denote the subgraph of $M$ inside (outside) of $C$. Both $\Sigma^+$ and $\Sigma^-$ are semi-MPG's with an outer facet $C$. Once we have a $T_r$ or $T_{rgb}$ on $M$, then we can apply Lemma~\ref{thm:evenoddRGB} and Corollary~\ref{thm:evenoddRGB2} on $\Sigma^+$ and $\Sigma^-$.  
   \end{remark}

   \begin{example} \label{ex:Counterexample1}
There are two red-connected components on the following graph and these red edges form an R-tiling. However, is this R-tiling good? What criteria do we think about ``good''? Of course, a good R-tiling means easy to paint by 4 colors or easy to tell that we have to use the color 5. So far we just want to experience some examples. 
   \begin{figure}[h]
   \begin{center}
   \includegraphics[scale=0.9]{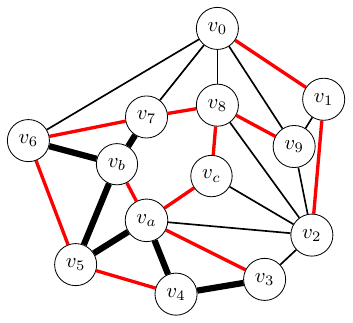}
   \end{center}
\caption{Along 5 or 7 facet, the numbers of black edges are both odd.}  \label{fig:57notGrand}
   \end{figure}
In this graph, we see several thick black edges linking two vertices in a single red-connected component; so there are many different cycles that have nearly all red edges but only one thick black edge. There are also some cycles who have more odd number of black edges; for instance, the cycle $C: v_4$-$v_5$-$v_b$-$v_7$-$v_8$-$v_c$-$v_a$-$v_4$ is one of them. We also see that there are a 5-gon and a 7-gon outer facets; the former has one black edge and the latter has three black edges. By Remark~\ref{re:CSigma}, any cycle $C'$ that separates the 5-gon and the 7-gon on the two side of itself will have odd number of black edges along $C'$. Any cycle $\bar{C}$ who makes the 5-gon and the 7-gon on the same side of itself will have even number of black edges along $\bar{C}$.    
   \end{example}

From Lemma~\ref{thm:evenoddRGB}, Corollary~\ref{thm:evenoddRGB2} and Example~\ref{ex:Counterexample1}, we find that an MPG and an $n$-semi-MPG are easy to deal with. That is why we call them \emph{One Piece} on purpose. 

   \begin{theorem}[The First Fundamental Theorem v1: for R-/RGB-tilings and 4-colorability] \label{thm:4RGBtiling}
Let $M$ be an MPG or an $n$-semi-MPG ($n\ge 4$). Then the following are equivalent:
   \begin{itemize}
\item[(a)] $M$ is 4-colorable.
\item[(b)] $M$ has an RGB-tiling.
\item[(c)] $M$ has an R-tiling without red odd-cycle.
   \end{itemize}
   \end{theorem}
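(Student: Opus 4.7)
The plan is to establish the three-way equivalence via four implications: (a)$\Rightarrow$(b), (b)$\Rightarrow$(c), (b)$\Rightarrow$(a), and (c)$\Rightarrow$(b). The organizing device will be the identification $\{1,2,3,4\}\leftrightarrow\mathbb{Z}_2\oplus\mathbb{Z}_2$ via $1=(0,0)$, $2=(0,1)$, $3=(1,1)$, $4=(1,0)$, which converts the R, G, B edge-colors of Section~\ref{sec:RGB1} into the three nonzero difference vectors $(1,1),(1,0),(0,1)$ of $\mathbb{Z}_2^2$; note these three vectors sum to $(0,0)$.

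Three of the implications should be short. For (a)$\Rightarrow$(b), a proper 4-coloring induces an R/G/B edge-coloring, and because every triangle of $M$ is a $K_3$, its three vertex-colors are distinct, so the three edge-differences exhaust the three nonzero vectors of $\mathbb{Z}_2^2$, giving an RGB-tiling. For (b)$\Rightarrow$(c), if some red cycle $C$ had odd length $\ell$, I would restrict the RGB-tiling to the interior region $\Sigma^+$ via Remark~\ref{re:CSigma}; along $\Omega(\Sigma^+)=C$ the R/G/B counts are $(\ell,0,0)$, which violates the shared-parity conclusion of Lemma~\ref{thm:evenoddRGB}(b). For (b)$\Rightarrow$(a), I will pick a basepoint $v_0\in V(M)$ with $f(v_0)=(0,0)$ and propagate $f$ to every other vertex by summing (in $\mathbb{Z}_2^2$) the edge-difference vectors along any path from $v_0$; since the cycle space of a planar MPG or $n$-semi-MPG is generated by its triangular facets and each triangle's three edges sum to $(1,1)+(1,0)+(0,1)=(0,0)$, the function $f$ is well-defined, and adjacent vertices always differ by a nonzero vector, so $f$ is a proper 4-coloring.

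The hard step will be (c)$\Rightarrow$(b): extending an R-tiling without red odd cycle to an RGB-tiling. My intended device is the auxiliary graph $A$ whose vertices are the black edges of $M$ and in which two black edges are adjacent iff they share a triangle. Since each triangle has exactly two black edges, a proper 2-coloring of $A$ by the labels $\{G,B\}$ is tantamount to an RGB-tiling extending $T_r$, so the task reduces to proving $A$ is bipartite. Take any cycle of length $k$ in $A$, realized by triangles $T_1,\ldots,T_k$ with $T_i$ containing black edges $b_i,b_{i+1}$ and red edge $r_i$. Working in the $\mathbb{Z}_2$-chain complex of $M$, I compute $\partial(T_1+\cdots+T_k)=\sum_i(b_i+b_{i+1}+r_i)=\sum_i r_i$, since each $b_j$ appears in exactly two $\partial T_i$ and cancels mod 2; thus $c:=\sum_i r_i$ is a $\mathbb{Z}_2$ 1-cycle supported on red edges of $M$, which therefore decomposes as an edge-disjoint union of simple red cycles in $M$. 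Letting $p$ count the indices $i<j$ with $r_i=r_j$ (at most two triangles meet any edge of a planar graph, so each red edge of $M$ appears among the $r_i$ with multiplicity $1$ or $2$), the total edge-count of $c$ equals $k-2p\equiv k\pmod 2$. If $k$ were odd, at least one component of $c$ would be a simple red cycle of odd length in $M$, contradicting the hypothesis on $T_r$. Hence every cycle of $A$ is even, $A$ is bipartite, the required 2-coloring exists, and the RGB-tiling is obtained.
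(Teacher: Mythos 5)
Your proposal is correct, but it reaches the hard direction by a genuinely different route than the paper. The paper proves the cycle (a) $\Rightarrow$ (b) $\Rightarrow$ (c) $\Rightarrow$ (a), deferring (c) $\Rightarrow$ (a) until it has built the canal-system machinery: Theorem~\ref{thm:RtilingOnePiece} shows by a topological induction (peeling off one canal line $rCL$ at a time, deleting deja-vu and bank edges, and recursing on the pieces $M^r_i$, $M^l_j$) that every R-tiling on One Piece is \emph{grand} in the sense of Definition~\ref{def:grandGrand}, after which Lemma~\ref{thm:grandRtilingNoOddCycle} converts grandness plus the no-red-odd-cycle hypothesis into a 4-coloring. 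You instead split the work into (c) $\Rightarrow$ (b) and (b) $\Rightarrow$ (a) and replace the induction with two short algebraic arguments: the $\mathbb{Z}_2$-boundary computation $\sum_i \partial T_i = \sum_i r_i$ showing your auxiliary graph $A$ (black edges adjacent when they share a triangle --- essentially the paper's canal system, with your cycles in $A$ being its canal rings) has no odd cycle, and the $\mathbb{Z}_2\oplus\mathbb{Z}_2$ potential function, well-defined because the cycle space of One Piece is generated by the triangular bounded faces. Both of your key steps are sound, and your (b) $\Rightarrow$ (c) matches the paper's (via Lemma~\ref{thm:evenoddRGB}(b)); the only nit is that you should take $\Sigma^+$ to be the side of the red cycle $C$ \emph{not} containing the outer facet rather than ``the interior'' per se, exactly as the paper's proof words it, since otherwise $\Sigma^+$ need not be a $|C|$-semi-MPG. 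What each approach buys: yours is shorter, self-contained, and makes transparent exactly where One Piece is used (your (c) $\Rightarrow$ (b) works even for $(n_1,\ldots,n_k)$-semi-MPGs, consistent with Example~\ref{ex:Counterexample3}, while the facial-generation step in (b) $\Rightarrow$ (a) is what fails for multi-holed pieces); the paper's longer route earns the grand-tiling and canal-line concepts, which are not disposable scaffolding but are reused in Lemma~\ref{thm:evenNumberBlack}, the generalized Theorem~\ref{thm:4RGBtilingGeneralization} for semi-MPGs with several outer facets, Lemma~\ref{thm:triangles} (whose triangle-parity statement is the paper's counterpart of your bipartiteness of $A$ along a ring), and the announced Parts II--III.
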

   \begin{proof}
The diagram of the whole proof is (a) $\Rightarrow$ (b) $\Rightarrow$ (c) $\Rightarrow$ (a). However, we do not have enough tools to prove [(c) $\Rightarrow$ (a)] clearly.  So, we just leave a rain check for a while and this part will be proved in the next section right after Theorem~\ref{thm:RtilingOnePiece}.  

[(a) $\Rightarrow$ (b)]: A 4-coloring function on any graph induces an RGB-edge-coloring. Since $M$ is an MPG or an $n$-semi-PG, an RGB-edge-coloring of $M$ is actually an RGB-tiling on $M$.

[(b) $\Rightarrow$ (c)]: Suppose $C$ is any red cycle in $M$. Since $M$ is an MPG or an $n$-semi-MPG, we see at least one of the two sides of $C$ forms a $|C|$-semi-MPG with outer facet $C$. The red is the only color along this outer facet; thus by Lemma~\ref{thm:evenoddRGB}(b) (or Corollary~\ref{thm:evenoddRGB2}) the length $|C|$ must be even for no green and blue along $C$.
    \end{proof}
    
   \begin{corollary} \label{thm:4RGBtiling2}
Let $M$ be an MPG or an $n$-semi-MPG ($n\ge 4$). The graph $M$ is non-4-colorable if either there is no R-tiling on $M$ or every R-tiling on $M$ has at least a red odd-cycle.
   \end{corollary}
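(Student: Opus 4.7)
The plan is to observe that this corollary is nothing more than the contrapositive of the composite implication [(a) $\Rightarrow$ (c)] of Theorem~\ref{thm:4RGBtiling}, which has \emph{already} been established in the portion of the proof that the authors did not defer. So, unlike the outstanding direction [(c) $\Rightarrow$ (a)], no new machinery is needed here.

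More precisely, the first two arrows of the cycle in Theorem~\ref{thm:4RGBtiling}, namely [(a) $\Rightarrow$ (b)] and [(b) $\Rightarrow$ (c)], together yield: \emph{if $M$ is 4-colorable, then $M$ admits an R-tiling with no red odd-cycle.} The first arrow produced an RGB-tiling from a 4-coloring via the induced RGB-edge-coloring and the fact that in an MPG or $n$-semi-MPG an RGB-edge-coloring is automatically an RGB-tiling; the second arrow used Lemma~\ref{thm:evenoddRGB}(b) (equivalently Corollary~\ref{thm:evenoddRGB2}), applied to the $|C|$-semi-MPG cut out by a putative red cycle $C$, to force $|C|$ to be even. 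Neither step appeals to the deferred implication [(c) $\Rightarrow$ (a)].

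Taking the contrapositive of [(a) $\Rightarrow$ (c)] gives: \emph{if there is no R-tiling on $M$ without a red odd-cycle, then $M$ is non-4-colorable.} The hypothesis of the corollary now splits into exactly the two exhaustive ways in which such an R-tiling can fail to exist — either no R-tiling exists at all, or every R-tiling has at least one red odd-cycle — and in either case the conclusion follows. I would write this as a short direct deduction, citing Theorem~\ref{thm:4RGBtiling} and case-splitting on whether the set of R-tilings on $M$ is empty.

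The main obstacle to worry about is not really a mathematical one; it is a bookkeeping/logical issue. One must be careful to rely only on the already-proved implications of Theorem~\ref{thm:4RGBtiling}, and not to inadvertently invoke the direction [(c) $\Rightarrow$ (a)] that is still on the rain check, since otherwise the corollary would be circular with respect to the very argument it supports.
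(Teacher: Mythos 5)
Your proposal is correct and matches the paper's own (implicit) justification: the corollary is stated immediately after Theorem~\ref{thm:4RGBtiling} precisely as the contrapositive of [(a) $\Rightarrow$ (c)], whose two constituent arrows [(a) $\Rightarrow$ (b)] and [(b) $\Rightarrow$ (c)] are exactly the parts proved on the spot, with no reliance on the deferred [(c) $\Rightarrow$ (a)]. Your extra care in noting that the two hypothesis cases jointly negate statement (c) is sound and consistent with the paper's reading.
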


Theorem~\ref{thm:4RGBtilingGeneralization} in the the next section is a generalization of this theorem. Although the proof of [(c) $\Rightarrow$ (a)] is still a rain check, we apply it as the following corollary.    

   \begin{corollary} \label{thm:atoc}
In an MPG or an $n$-semi-MPG ($n\ge 4$), if we have any two coexisting R/G/B-tilings, then the third coexisting single coloring tiling is immediately ready. Also there are no red/green/blue odd-cycles.
   \end{corollary}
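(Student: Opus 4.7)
The plan is to observe that within any triangle an RGB-tiling must use three distinct colors, so fixing two of the three single-color tilings forces the third edge-by-edge; once an RGB-tiling is produced, the absence of monochromatic odd-cycles follows directly from Theorem~\ref{thm:4RGBtiling}.

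Without loss of generality suppose coexisting tilings $T_r$ (an R-tiling) and $T_g$ (a G-tiling) are given on $M$. Coexistence forces no edge to be simultaneously painted red by $T_r$ and green by $T_g$. I would define $T_b:E(M)\to\{\text{blue},\text{black}\}$ by setting $T_b(e)=\text{blue}$ exactly when $T_r(e)\neq\text{red}$ and $T_g(e)\neq\text{green}$, and $T_b(e)=\text{black}$ otherwise. To verify that $T_b$ is a B-tiling in the sense of Definition~\ref{def:Rtiling}, fix any triangle $\triangle$ of $M$; by hypothesis $\triangle$ has exactly one red edge $e_r$ and exactly one green edge $e_g$, and by coexistence $e_r\neq e_g$, so the remaining edge $e_b$ of $\triangle$ is neither red nor green and is the unique edge of $\triangle$ assigned blue by $T_b$. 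Well-definedness of $T_b$ on an interior edge shared by two triangles is automatic, because the rule defining $T_b(e)$ depends only on the values $T_r(e)$ and $T_g(e)$, not on which triangle is being examined. Hence $T_r$, $T_g$, $T_b$ jointly give an RGB-tiling on $M$, which is exactly the assertion that the third single-color tiling is ``immediately ready.''

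For the odd-cycle statement, once the RGB-tiling has been exhibited, the implication (b)$\Rightarrow$(c) of Theorem~\ref{thm:4RGBtiling} rules out red odd-cycles. That implication uses only Lemma~\ref{thm:evenoddRGB}(b), which is symmetric in the three edge-colors, so the very same argument rules out green and blue odd-cycles as well. I do not expect a serious obstacle: the one subtle point is checking that the induced third color is well-defined across adjacent triangles, which is a one-line consequence of the definition above, and then observing the color-symmetry of Lemma~\ref{thm:evenoddRGB}(b) so the red argument transfers verbatim to green and blue.
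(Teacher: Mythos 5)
Your proof is correct and follows essentially the same route the paper intends: Corollary~\ref{thm:atoc} is stated there as a direct application of Theorem~\ref{thm:4RGBtiling}, with the third tiling obtained exactly as you do (paint the unique remaining edge of each triangle, well-defined since the tiling functions are per-edge), and the odd-cycle claim coming from the (b)$\Rightarrow$(c) argument via Lemma~\ref{thm:evenoddRGB}(b), which is symmetric in the three colors. One point in your favor: your write-up makes explicit that only the already-proved implication (b)$\Rightarrow$(c) is needed, so the corollary does not actually depend on the deferred ``rain check'' direction (c)$\Rightarrow$(a) that the paper's surrounding remark alludes to.
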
   
    
Since $e\mathcal{MPGN}4 = e\mathcal{N}4$ and by definition, all MPG's in this set have same number of vertices (same order). Let $\omega$ be this order. For convenience we usually use $EP$ to denote one of the extremum planar graph in $e\mathcal{MPGN}4$. The order of $EP$ is $\omega$. If $G$ is a planar graph (not necessarily an MPG or a semi-MPG) such that $|G|<\omega$ or $G$ is a subgraph of $EP$ with less number of edges, then $G$ must be 4-colorable. And then this 4-coloring function of $G$ induces an R-tiling on $G$ without red odd-cycles.   

   \begin{remark}
In set theory, ordinal numbers and cardinal numbers are so fundamental and fancy in mathematics. We use $|EP|=\omega$ in purpose, because $\omega$ is the first ordinal number corresponding to infinity. 
   \end{remark}

   \begin{example}[Counterexample] \label{ex:Counterexample2}
Here a counterexample is provided to show that ``(c) $\Rightarrow$ (a)'' of Theorem~\ref{thm:4RGBtiling}  might not work for an $(n_1, n_2)$-semi-MPG.
   \begin{figure}[h]
   \begin{center}
   \includegraphics[scale=0.9]{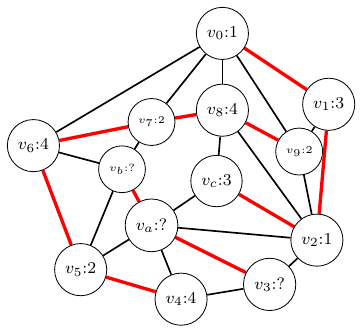}
   \end{center} 
\caption{Odd number of black edges along 5 and 7 facets}  
\label{fig:counterEX2}
   \end{figure} 
We show a $(5,7)$-semi-MPG and an R-tiling without red odd-cycle on it in Figure~\ref{fig:counterEX2}. According to the rule, red edges are used to connect vertices with colors 1 and 3, or colors 2 and 4. Without loss of generality, a 4-coloring on the red-connected components $rC_{v_0}:=\{v_0, v_1, v_2, v_c\}$ and $rC_{v_8}:=\{v_4, v_5,\ldots, v_9\}$ has been done. Now we find that there is not proper way to color vertices $v_3, v_a, v_b$, particularly for $v_a$; unless we are allowed to set coloring function $f(v_a)=5$. This example shows that on $(n_1, n_2,\ldots, n_k)$-semi-MPG providing an R-tiling without red odd-cycle is not enough to achieve a 4-coloring function. The example also shows that three (or odd number of) red-connected components $rC_{v_0}$, $rC_{v_8}$ and $rC_{v_3}:\{v_3, v_a, v_b\}$ ``in a loop'' either along the 5-gon or the 7-gon will cause problems. 
   \end{example}
   
   \begin{example}[Counterexample] \label{ex:Counterexample3}
Here we give another counterexample to show that ``(b) or (c) $\Rightarrow$ (a)'' of Theorem~\ref{thm:4RGBtiling} might not work for an $(n_1, n_2)$-semi-MPG. We show a $(5,5)$-semi-MPG and assign a fixed R-tiling $T_r$ without red odd-cycle as the left graph in Figure~\ref{fig:counterEX3}. Let us try to color green or blue for the rest black edges.  Even if we can extend $T_r$ to all kinds of different coexisting $T_{rgb}$, but none of these $T_{rgb}$ can induce a 4-coloring function. 
   \begin{figure}[h]
   \begin{center}
   \begin{tabular}{ccc}
   \includegraphics[scale=0.76]{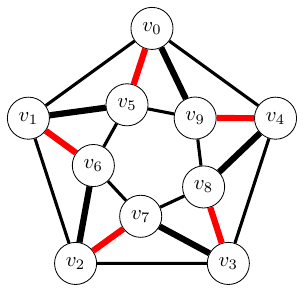}&
   \includegraphics[scale=0.76]{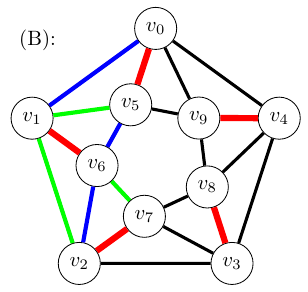}&
   \includegraphics[scale=0.76]{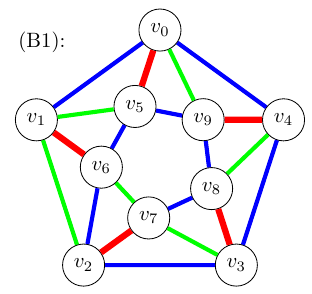}
      \end{tabular}
   \end{center}
\caption{Given $T_r$, let us finish it by (A) and (B). Here is (B1).}  
\label{fig:counterEX3}      
   \end{figure}
In particular, we focus on the element in $X:=\{v_1v_5, v_2v_6, v_3v_7, v_4v_8, v_0v_9\}$. Without loss of generality, we assume that among $X$ the number of green edges is greater than the number of blue edges. (A) If all five edges in $X$ are green then $v_0$-$v_1$-$v_2$-$v_3$-$v_4$-$v_0$ and $v_5$-$v_6$-$v_7$-$v_8$-$v_9$-$v_5$ must be two blue 5-cycles; thus we cannot finish (A) with a 4-coloring function from this RGB-tiling.  (B) Suppose edges in $X$ use both green and blue. Without loss of generality, we set $v_1v_5$ green and $v_2v_6$ blue, and then $v_0v_1$ and $v_5v_6$ must be blue, and $v_1v_2$ and $v_6v_7$ must be green. This temporary status is shown as the middle graph in Figure~\ref{fig:counterEX3}. According to the possible colors of the rest three edges in $X$, we need only consider the following three subcases: 
   \begin{figure}[h]
   \begin{center}
   \begin{tabular}{cc}
   \includegraphics[scale=0.8]{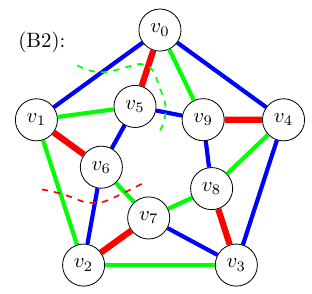}\quad &
\quad  \includegraphics[scale=0.8]{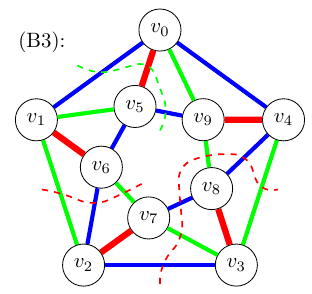}
      \end{tabular}
   \end{center}
\caption{(B2) and (B3)}  
\label{fig:counterEX31}  
   \end{figure}
(B1) Let all three of $v_3v_7$, $v_4v_8$, $v_0v_9$ be green. Even though this is an RGB-tiling, the blue path from $v_1$ to $v_7$ cannot induce a 4-coloring. It is quite interesting that a single blue-connected component ``in a loop'' either along the 5-gon or the 7-gon will cause problems. (B2) Let $v_3v_7$ be blue and $v_4v_8$, $v_0v_9$ green and we get the left graph in Figure~\ref{fig:counterEX31}. The bug occurs because three green-connected components ``in a loop''; so, there is no 4-coloring. (B3) Let $v_4v_8$ be blue and $v_3v_7$, $v_0v_9$ green, and then we get the right graph in Figure~\ref{fig:counterEX31}. Again, the bug occurs because three green-connected components ``in a loop''; so, there is no 4-coloring.  Finally, we make a conclusion: Once we assign a fixed R-tiling $T_r$ without red odd-cycle as the left graph in Figure~\ref{fig:counterEX3}, there are already five red-connected components ``in a loop'' along  both two 5-gons. By the way, the red and green dashed-lines in (B2) and (B3) are drawn for Example~\ref{ex:Conquer} in advance, not now.
   \end{example}

\section{Grand canal lines over One Piece}    \label{sec:Grandline}

In this and the last sections, \underline{3-gon outer facets are allowed}.  Sometime we also allow \underline{two outer facets to share edges} in a semi-MPG. As a 3-gon outer facet, it does not need to follow Definition~\ref{def:Rtiling} for any provided R-tiling $T_r$. As for edge counting, a shared edge shall be counted by multiplicity 2 w.r.t\ the two outer facets that it belong to.

Our interest is focused on MPG's and semi-MPG's. In the last section, we saw several examples that an $(5,7)$-semi-MPG might cause problems, and then we realized that an MPG and an $n$-semi-MPG are good to to deal with. For fun we call an MPG as well as an $n$-semi-MPG \emph{One Piece}: a Japanese manga series), because they really are a solid planar piece, where ``solid'' means no hole. 

The three (counter) examples in the last section all provided a non-one-piece which looks like an eye of volcanic island or a belt of ring. Even though a belt of ring or an $(n_1,n_2,\ldots,n_k)$-semi-MPG is planar and definitely 4-colorable, but its topological structure creates obstacles for our renewal approach, i.e., sometimes we fail to translate a single color tiling on an $(n_1,n_2,\ldots,n_k)$-semi-MPG to a 4-coloring function. To fix this problem, we shall study some extra requirement set for a ``good'' single color on an $(n_1,n_2,\ldots,n_k)$-semi-MPG.

One Piece is definitely the main topic of further discussion. However, here is the last section that we care about an $(n_1,n_2,\ldots,n_k)$-semi-MPG $M$. Once an R-tiling $T_r$ exists on an $(n_1,n_2,\ldots,n_k)$-semi-MPG $M$, there are many red-connected components $rC_i$ for $i=1,2,\ldots$ as induced subgraphs of $M$; even if $rC_i$ is just a single point. We can also define  $T_r:=\bigcup_i E(rC_i)$ as a set of red edges. If we just consider a short line of red edges in any $rC_i$, they forms a wall of red \emph{canal bank} (of river bank). For instance, $v_6$-$v_0$-$v_1$-$v_2$-$v_e$-$v_2$-$v_f$-$v_2$ in Figure~\ref{fig:deja_vu} is a (part of) canal bank.  We shall define \emph{canal lines} first and then explain canal banks precisely.

   \begin{figure}[h]
   \begin{center}
   \includegraphics[scale=1.2]{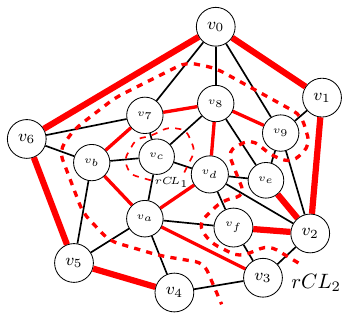}
   \end{center}
\caption{Canal lines and canal banks; Four deja-vu edges for $rCL_2$}  
\label{fig:deja_vu}     
   \end{figure}    
    \begin{definition}  \label{def:deja_vu} 
Let $M$ be a MPG or semi-MPG with an R-tiling $T_r=\bigcup_i E(rC_i)$. We use Figure~\ref{fig:deja_vu} to demonstrate examples. Among and inside these $rC_i$ are red \emph{canal lines}, denoted by $rCL_j$ for $i=1,2,\ldots$. For example, there are two red canal lines $rCL_1$ and $rCL_2$ represented by dashed lines . The two sides of any $rCL$ are its coexisting \emph{left/right canal bank}, denoted by $rCL^r$/$rCL^l$ (or $rCB_j$ as general notation). For example, if we consider $rCL_1$ flow clockwise then $rCL_1^r=v_e$ and $rCL_1^l=v_0$-$v_1$-$\ldots$-$v_6$-$v_0$. Following the current along a red canal line $rCL$, we might see a red edge twice from its different sides. We call it a \emph{deja-vu} edge for $rCL$. In Figure~\ref{fig:deja_vu}, there is no deja-vu edge for $rCL_1$ and there are four deja-vu edges for $rCL_2$, namely $v_3v_a$, $v_8v_9$, $v_2v_e$ and $v_2v_f$.
    \end{definition}

In manga and comic style, the ships of cargo and pirate follow the one-way \emph{current} of a red canal line. The basic or elemental segment of a red canal line is a single red half-tile (triangle) who has one red edge and two black edges. A complete red canal line is made by a sequence of red half-tiles, while the current flow into and out of a triangle by crossing two black edges. 

The collect of all these canal lines $rCL_i$ created by an fixed R-tiling on $M$ is called a red \emph{canal system}, denote and defined by $rCLS=\bigcup_j rCL_j$. We shall treat canal system first as a normal graph and then a directed graph. While treated as a normal graph, every triangle of red half-tile, which is a triangle, plays as a node of degree two, where the two links\footnote{We shall avoid to use the name edge/vertex for the canal system $rCLS$ which is actually a subgraph of the dual graph of $M$.} are translated from the two black edges of this triangle.  A red canal line $rCL_j$ is than a collection of connected links. Because every node is degree two, each canal line $rCL_j$ is either a cycle or a path, where the latter one is from one outer facet to another facet (maybe the same outer facet). A canal system must have the following properties.

   \begin{lemma}   \label{thm:InOutCanal}
Let $M$ be an MPG or an $n$-/$(n_1,n_2,\ldots,n_k)$-semi-MPG with an R-tiling $T_r$.  
      \begin{enumerate}
\item[(a)] A red tiling $T_r:=\bigcup_i rC_i$ on $M$ and a red canal system $rCLS:=\bigcup_j rCL_j$ are different perspectives of looking the same thing.
\item[(b)] By linking nodes of triangles, a red canal line $rCL_j$ of $T_r$ is either {\rm (b1)} a close cycle, called \emph{canal ring}, or {\rm (b2)} a path starting from one outer facet and ending at another outer facet (maybe the same outer facet), while the pair of entrance and exit on the two end of this path are both black edges along the outer facets. 
\item[(c)] If $M$ is an MPG, then every red canal line $rCL_j$ is ring. If $M$ is an $n$-semi-MPG, then the connection of entrances and exits
of this red canal system $rCLS$ creates a non-crossing match among all black edges along the unique outer facet.   
      \end{enumerate}      
   \end{lemma}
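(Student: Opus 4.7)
My plan is to recast the lemma as a statement about the dual graph of the triangulation. Let $\mathcal{D}$ be the graph whose nodes are the triangular facets of $M$ and whose links are the duals of the black edges of $T_r$, where a black edge lying along an $n$-gon outer facet ($n\ge 4$) contributes only a half-edge (one loose end) rather than a full link. Part (a) is then a restatement of the R-tiling axiom ``one red and two black edges per triangle'': prescribing which edge of each triangle is red is the same data as prescribing $\mathcal{D}$, because the red edges are exactly the complement (in $E(M)$) of the edges whose duals appear in $\mathcal{D}$, and each $rCL_j$ is exactly a connected component of $\mathcal{D}$ realized as a walk through triangle centers that crosses its black links transversally.

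For (b), the key observation is that every triangular node of $\mathcal{D}$ has combined degree exactly $2$ (two black edges per triangle), so every connected component has maximum degree $2$ with ``leaves'' only at half-edges. A standard classification then forces each component to be either a closed walk made purely of full links or a walk whose two endpoints are half-edges. Translating back, the first possibility is a canal ring (b1). In the second case, each half-edge at an endpoint is by construction a black edge on some outer facet, so the canal line enters $M$ across one such boundary edge and leaves across another, yielding (b2); the two endpoints may coincide on the same outer facet since nothing in the argument forbids it.

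For (c), the MPG case follows immediately: all facets (including the outer one) are triangles, so $\mathcal{D}$ has no half-edges at all, every triangular node has full-link degree $2$, and every component is a cycle, i.e., a canal ring. In the $n$-semi-MPG case, the half-edges are precisely the black edges along the unique outer $n$-gon, whose count is even by Lemma~\ref{thm:evenoddRGB}(a); pairing the two half-edge endpoints of each canal path yields a perfect matching on these black boundary edges. To see the matching is non-crossing, I would realize each canal line as a simple arc inside the closed disk bounded by the outer facet, routed through the centers of its triangles and across the midpoints of its black links. Distinct canal lines visit disjoint triangles and never share a black link, so the arcs are pairwise disjoint simple curves in the disk with endpoints on its boundary; two disjoint arcs in a disk cannot have interleaved boundary endpoints, which is the desired non-crossing property by a direct Jordan-curve argument.

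The step I expect to be most delicate is the non-crossing claim in (c): the duality translation in (a) and the degree-$2$ classification in (b) are essentially formal, but the non-crossing matching rests on a careful planarity argument, and one must be precise that the realizing arcs really are pairwise disjoint (they share no triangle, cross no red edge, and meet the boundary only at the midpoints of black edges). Book-keeping the borderline cases that the earlier text permits---a $3$-gon outer facet, or two outer facets sharing an edge---also belongs to this last step, though here they are excluded by the hypothesis of (c).
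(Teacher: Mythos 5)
Your proposal is correct and follows essentially the same route as the paper: the paper's own (very terse) proof simply points back to the preceding paragraphs, where the canal system is described exactly as a degree-two dual structure whose components are cycles or boundary-to-boundary paths, and it dismisses the non-crossing matching in (c) with the single remark that an $n$-semi-MPG is planar. Your write-up merely makes explicit what the paper leaves implicit---the half-edge bookkeeping at boundary black edges and the disjoint-arcs-in-a-disk Jordan-curve argument---so it is a faithful, more rigorous rendering of the intended proof rather than a different one.
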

   \begin{proof}
The proofs of (a), (b) and the first part of (c) have been already stated in the previous paragraphs. We need only provide a proof for the second part of (c). The reason is very simple: a $n$-semi-MPG is planar.  
   \end{proof}

Now we need to indicate the \emph{direction of current} for each red canal line, i.e., we see canal system as a directed graph. While treated as a directed graph, 
a single triangle as a node is incident with two directed links which are one in and one out. Here is the rule of choosing direction: If a single color R-tiling on an $(n_1,n_2,\ldots,n_k)$-semi-MPG associates with a 4-coloring, along the current of any red canal line we shall always see color-1 vertices laid on the canal bank of our right hand side. So the perfect currents for a red canal system $rCLS$ has a important requirement: For every red diamond, the directions of two currents on the two sides of this red edge must be opposite.    

As for a G-tiling or a B-tiling on an $(n_1,n_2,\ldots,n_k)$-semi-MPG, we still obey the rule of choosing direction: ``color-1 vertices laid on the canal bank of our right hand side'' for $GCS$ or $BCS$.  

    \begin{example}[Conquering Counter-Examples~\ref{ex:Counterexample2} and~\ref{ex:Counterexample3}]  \label{ex:Conquer}
According to the hypothesis of Theorem~\ref{thm:4RGBtiling}, we had better focus on an MPG or an $n$-semi-MPG. However, here we still try to conquer the last two counterexamples without involving Theorem~\ref{thm:4RGBtiling}. For Example~\ref{ex:Counterexample2}, we keep most red edges, but let $v_6v_7$ black and let $v_0v_6$, $v_7v_b$ red. This is edge-color-switching along a diamond route. The idea of diamond routes will be formally introduced in the Part III of our paper. The new R-tiling is shown by the left graph below and it is a ``good'' R-tiling. 
   \begin{figure}[h]
   \begin{center}
   \begin{tabular}{ c c }
   \includegraphics[scale=0.79]{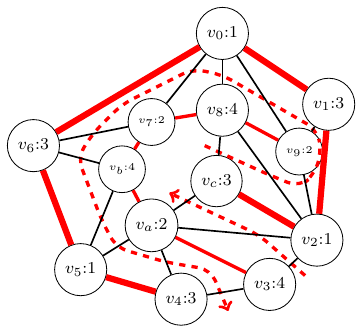}\
   \includegraphics[scale=0.9]{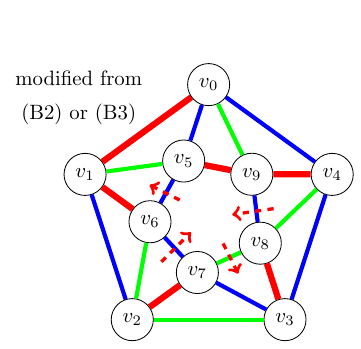}
   \end{tabular}
   \end{center}
\caption{Countering the errors in Examples~\ref{ex:Counterexample2} and~\ref{ex:Counterexample3}}  
\label{fig:Conquer}     
   \end{figure}
As for Example~\ref{ex:Counterexample3}, we shall back to Figure~\ref{fig:counterEX31} and consider the right two graphs. In these two graphs, we left some dashed-lines in advance. These dashed-lines are canal lines according to their own colors.  Along these red and green dashed-lines, we perform edge-color-switching and then obtain the right graph in Figure~\ref{fig:Conquer} from both (B2) and (B3).  This new RGB-tiling is ``good'' enough to induce a 4-colorable function. The way to perform edge-color-switching along an R/G/B canal lines will be will be formally introduced in the Part III. One more thing to do is indicating the directions of current system by the four red dashed-lines. We made these four directions perfect by using two ins and two outs along the 5-gon alternately. In this way, the two directions on the two sides of every red edge in diamond are opposite.   
   \end{example}

With the experience from Examples~\ref{ex:Counterexample1}, \ref{ex:Counterexample2}, \ref{ex:Counterexample3} and~\ref{ex:Conquer}, we claim the following definition to make clear about ``good'' R-tilings. Here we define a \emph{grand} R-tiling in view of red canal banks and a \emph{grand} R-canal system in view of orientation of currents. Actually they are same concept from two points of view.
   \begin{definition}  \label{def:grandGrand}
Let $M$ be an $(n_1,n_2,\ldots,n_k)$-semi-MPG with an R-tiling $T_r$. Here we \underline{allow} two outer facets to share edges. Let $M_r$ denote the subgraph of $M$ by deleting all black edges and $M_{bl}$  denote the subgraph of $M$ by deleting all red edges. We say this R-tiling $T_r$ is a \emph{grand} tiling if $V(M)$ can be partitioned into two disjoint parts $V_{13}$ and $V_{24}$, i.e., $V(M) = V_{13} \uplus V_{24}$, such that $M_{bl}$ is a bipartite graph with bipartite vertex sets $V_{13}$ and $V_{24}$, and no red edges link between $V_{13}$ and $V_{24}$. 
   \end{definition} 
   \begin{lemma}  \label{thm:RandomWalk}
Let $M$ be an $(n_1,n_2,\ldots,n_k)$-semi-MPG. If $M$ has a grand R-tiling $T_r$, then the two subgraphs induced in $M$ by $V_{13}$ and $V_{24}$ consist of all red edges of $T_r$ and no black edges. Furthermore, for any vertices $w, x\in V_{13}$ and $y, z\in V_{24}$, any walk through black edges from $x$ to $y$ must be odd; and any walk through black edges from $w$ to $x$ or from $y$ to $z$ must be even.      
   \end{lemma}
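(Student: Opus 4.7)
The plan is to observe that this lemma is essentially a direct corollary of the definition of a grand R-tiling: the content of Definition~\ref{def:grandGrand} is precisely that $M_{bl}$ is bipartite on $V_{13} \uplus V_{24}$ and that red edges never cross between the two parts, so what remains is to translate these two hypotheses into (i) a statement about edge colors inside each part and (ii) a standard bipartite parity count.

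First I would prove the structural claim about the induced subgraphs $M[V_{13}]$ and $M[V_{24}]$. Every edge of $M$ is either red or black. A black edge of $M$ belongs to $M_{bl}$, which is bipartite with parts $V_{13}$ and $V_{24}$, so both endpoints of any black edge lie in opposite parts; hence no black edge is induced inside $V_{13}$ or inside $V_{24}$. On the other hand, by the hypothesis of a grand R-tiling, no red edge has one endpoint in $V_{13}$ and the other in $V_{24}$, so every red edge of $T_r$ has both endpoints in the same part and therefore appears in one of the induced subgraphs $M[V_{13}]$, $M[V_{24}]$. Combined, this shows $M[V_{13}] \cup M[V_{24}]$ consists exactly of all red edges of $T_r$ and contains no black edge.

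Next I would handle the parity claim. Consider a walk $x = u_0, u_1, u_2, \ldots, u_n = y$ whose consecutive edges $u_{i-1}u_i$ are all black. Each such edge is an edge of the bipartite graph $M_{bl}$ and therefore has endpoints in opposite parts; so membership in $V_{13}$ versus $V_{24}$ flips at every step of the walk. Inductively, $u_i$ and $u_0$ lie in the same part exactly when $i$ is even. Applying this with $u_0 = x \in V_{13}$ and $u_n = y \in V_{24}$ forces $n$ to be odd; applying it with both endpoints in $V_{13}$ (respectively both in $V_{24}$) forces $n$ to be even. This gives the two parity assertions of the lemma.

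The hardest step is really only conceptual, not technical: one must recognize that the ``walk'' (not merely ``path'') version of the bipartite parity statement still holds because the alternation argument depends only on single edges flipping the part, not on the walk being non-repeating. Once that is noted, nothing further is needed; no appeal to the tiling structure, to canal lines, or to planarity is required beyond what is packaged inside Definition~\ref{def:grandGrand}.
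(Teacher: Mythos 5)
Your proof is correct and matches the paper's treatment: the paper states Lemma~\ref{thm:RandomWalk} with no proof at all, regarding it as immediate from Definition~\ref{def:grandGrand}, and your two steps (black edges must cross the bipartition while red edges must stay within a part, then the edge-by-edge alternation argument for parity, correctly noted to hold for walks and not just paths) supply exactly the verification the paper leaves implicit. Nothing further is needed.
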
   

Why do we use notation $V_{13}$ and $V_{24}$? Please, refer to Figure~\ref{fig:1234EdgeColor}.   
   
   \begin{definition}  \label{def:grandGrand2}
We say the red canal system $rCLS$ induced by an R-tiling on $M$ is a \emph{grand} canal system if we can arrange orientation for all canal lines such that the flow directions are opposite on the two sides of each red edge in diamond. Notice that here we \underline{do not allow} two outer facets to share edges. 
   \end{definition} 

For $M$ without any edge shared by two outer facets, these two definitions are equivalent by our general rule: when we follow the direction of red current $rCL$ (a red canal line with a fixed direction), the red canal bank on the right-hand-side always belongs to $V_{13}$ which contains vertex-color 1 and denoted by $rCL^r$ (or $rCB_i$ as general notation), and the red canal bank on the left-hand-side, denoted by $rCL^l$, always belongs to $V_{24}$ that has no vertex-color 1. Sometimes we will draw red edges in $V_{13}$ thicker than those in $V_{24}$, just like the right two graphs in Figure~\ref{fig:1234EdgeColor} and the right graph in Figure~\ref{fig:twoRGBtiling}. Given a grand R-tiling $T_r$ on $M$, the red-connected components $rC_i$ are sorted into two classes: some belong to the subgraph induced in $M$ by $V_{13}$, and the others belong to the one by $V_{24}$.

   \begin{lemma}    \label{thm:grandRtilingGrandLineSys}
Given an R-tiling $T_r$ on an MPG or an $n-$/$(n_1,n_2,\ldots,n_k)$-semi-MPG, it is grand if and only if the red canal system $rCLS$ induced by R-tiling is grand.  
   \end{lemma}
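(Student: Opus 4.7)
The plan is to prove both implications by matching, for each canal line, its two geometric banks with the two parts of the bipartition $V_{13}\uplus V_{24}$. The link in both directions is the elementary observation that inside a single red half-tile $T$ traversed by a canal line, the two endpoints of $T$'s red edge lie on one side of the canal (the ``red-edge bank'') while $T$'s apex lies on the opposite side (the ``apex bank''). Once this local picture is married to the global canal orientations, the two notions of ``grand'' should collapse into each other.

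For the forward direction, assuming $T_r$ is grand with $V=V_{13}\uplus V_{24}$, I argue first that in every half-tile $T$ on any canal line, the two red endpoints lie in a common class (since no red edge crosses classes) and the apex lies in the opposite class (since the two black edges of $T$ join the apex to the red endpoints and $M_{bl}$ is bipartite). So in each $T$, one geometric side is entirely in $V_{13}$ and the other entirely in $V_{24}$. Across a shared black edge $e=xy$ of consecutive half-tiles $T_i,T_{i+1}$ on the canal, the endpoints $x,y$ lie on opposite sides of the canal and in opposite classes, so the ``$V_{13}$ side'' of the canal is a globally coherent geometric side. I then orient each canal so that its $V_{13}$ bank is on the right. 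For any red diamond with shared edge $e$, both adjacent triangles then have $e$ on the same geometric side of the two canals; the small coordinate picture with $v_a=(0,0), v_b=(1,0)$ and the two apices at $(0.5,\pm 1)$ shows that having $e$ on the same side of the canal in both triangles is equivalent to the two canal flows at $e$ pointing in opposite directions, which is exactly the grand-canal-system condition.

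For the reverse direction, assuming $rCLS$ admits a grand orientation, I define $V_R,V_L\subset V(M)$ to be the sets of vertices that lie on the right, respectively left, of every canal passing through a half-tile incident to the given vertex. The crucial consistency claim is that each vertex belongs to exactly one of $V_R,V_L$. Fix $v$ with cyclic fan $T_1,\ldots,T_d$ of incident half-tiles (or an open arc if $v$ sits on the boundary of an outer facet), and consider consecutive $T_i,T_{i+1}$ sharing edge $vw_i$. If $vw_i$ is black, then $T_i$ and $T_{i+1}$ lie on one and the same canal and $v$ has one definite geometric side of that canal. If $vw_i$ is red, it is forced to be the red edge of the diamond $T_i\cup T_{i+1}$, and the opposite-flow condition places $vw_i$ (hence its endpoint $v$) on the same geometric side of the two distinct canals $\alpha_i,\alpha_{i+1}$. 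Inducting around the fan gives $V(M)=V_R\uplus V_L$. Setting $V_{13}:=V_R$ and $V_{24}:=V_L$, any red edge in a diamond has both endpoints on the red-edge bank of both adjacent triangles and so lies in one class, while any black edge is a canal link whose two endpoints lie on opposite sides of the crossing canal and so connects different classes. This is exactly a grand R-tiling.

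The main obstacle is the red-shared-edge step of the fan induction in the reverse direction, where $\alpha_i$ and $\alpha_{i+1}$ are distinct canals and ``right of $\alpha_i$'' and ``right of $\alpha_{i+1}$'' must be shown to coincide for $v$. This is precisely where the opposite-flow definition of ``grand'' is used essentially: the same coordinate picture that drives the forward direction must be recycled locally to show that opposite flows on the two sides of the red edge $vw_i$ place that edge on the same geometric side of both canals, and hence carry the vertex $v$ to the same side. A secondary bookkeeping point, easy once one trusts the main argument, is fans that are open at an outer facet of a semi-MPG, where the induction simply runs from one end of the open arc to the other rather than closing cyclically.
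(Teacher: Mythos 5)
Your proof is correct and takes essentially the same approach as the paper: the paper states Lemma~\ref{thm:grandRtilingGrandLineSys} without a formal proof, justifying it only by the surrounding remark that along each directed red current the right-hand canal bank is exactly the $V_{13}$ side, and your bank-to-bipartition matching, the ``$V_{13}$ on the right'' orientation rule, and the opposite-flow picture at a red diamond formalize precisely that identification. The one point to watch is the well-definedness step in your reverse direction when the fan of half-tiles at a vertex is not a single cyclic or open arc (e.g., a vertex touched by two outer facets), a degenerate case the paper likewise leaves untreated.
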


Once an R-tiling is grand, we can set $V_{13}$ ready for colors 1 and 3, and $V_{24}$ ready for colors 2 and 4. However, ``ready'' does not mean a real 4-coloring function, we still need the crucial requirement: each red-connected component $rC_i$ has no red odd-cycle. 

Once we stitch to focus on a G-tiling/B-tiling, we shall follow the general rule: those vertex-color 1 are always on the right-hand-side if we follow the green/blue current. Therefore, we see $V_{14}$/$V_{12}$ as canal banks on the right-hand-side of all green/blue currents.

   \begin{lemma}    \label{thm:grandRtilingNoOddCycle}
Given an R-tiling on an MPG or an $n$-/$(n_1,n_2,\ldots,n_k)$-semi-MPG, say $M$,, if it is grand and has no red odd-cycles, then it can induce a 4-coloring function on $M$.  
   \end{lemma}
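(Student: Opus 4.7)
The plan is to construct an explicit $4$-coloring $f : V(M) \to \{1,2,3,4\}$ by combining the bipartition supplied by grandness with a $2$-coloring of each red-connected component supplied by the no-red-odd-cycle hypothesis. By Definition~\ref{def:grandGrand}, the grand hypothesis gives a partition $V(M) = V_{13} \uplus V_{24}$ such that every black edge joins $V_{13}$ to $V_{24}$ and no red edge crosses between the two parts. In particular, each red-connected component $rC_i$ lies entirely inside one of $V_{13}$ or $V_{24}$, so the induced red subgraphs on $V_{13}$ and on $V_{24}$ together account for all red edges of $T_r$ (this is precisely Lemma~\ref{thm:RandomWalk}).

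Next I would invoke the ``no red odd-cycle'' hypothesis to observe that each $rC_i$, regarded as a graph on its red edges, contains no odd cycle and hence is bipartite. Fix any bipartition $(A_i, B_i)$ of $rC_i$. If $rC_i \subseteq V_{13}$, set $f \equiv 1$ on $A_i$ and $f \equiv 3$ on $B_i$; if $rC_i \subseteq V_{24}$, set $f \equiv 2$ on $A_i$ and $f \equiv 4$ on $B_i$. A vertex that is isolated in the red subgraph is a singleton $rC_i$, and receives color $1$ or $2$ according to which part of $V(M)$ it belongs to. This defines $f$ on all of $V(M)$, and the choice of which half of each $rC_i$ gets the smaller color is completely arbitrary: no compatibility between distinct components is required.

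Finally I would check that $f$ is a proper vertex coloring by going through the two kinds of edges. A red edge $xy$ necessarily lies inside some $rC_i$, so $\{f(x), f(y)\}$ is one of $\{1,3\}$ or $\{2,4\}$ and $f(x) \neq f(y)$ by the bipartition of $rC_i$. A black edge $xy$ belongs to $M_{bl}$, so by grandness one of $x,y$ lies in $V_{13}$ and the other in $V_{24}$; hence $\{f(x), f(y)\}$ has one element in $\{1,3\}$ and the other in $\{2,4\}$, and in particular $f(x) \neq f(y)$. Since every edge of $M$ is either red or black under an R-tiling, this exhausts all cases and $f$ is a valid $4$-coloring of $M$.

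There is no real obstacle; the substance of the lemma is already absorbed into the two hypotheses. The only point one should double-check is that the ``no red odd-cycle'' hypothesis really applies to each $rC_i$ separately (so the local bipartitions exist) and that no global consistency is needed across different $rC_i$'s, which is guaranteed precisely because grandness forbids red edges from crossing between $V_{13}$ and $V_{24}$ and forces every black edge to do so.
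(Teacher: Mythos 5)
Your proof is correct and follows precisely the route the paper intends: the lemma is stated there without a formal proof, but the surrounding discussion (``$V_{13}$ ready for colors 1 and 3, and $V_{24}$ ready for colors 2 and 4,'' together with the remark that the crucial extra requirement is that each red-connected component $rC_i$ have no red odd-cycle) and the later cashed-in proof of [(c) $\Rightarrow$ (a)] of Theorem~\ref{thm:4RGBtiling} sketch exactly your argument. You have simply made explicit the details the paper leaves implicit---the bipartiteness of each $rC_i$ from the no-odd-cycle hypothesis, the independent per-component choice of $\{1,3\}$ or $\{2,4\}$ according to whether $rC_i \subseteq V_{13}$ or $rC_i \subseteq V_{24}$, and the edge-by-edge check that red edges separate colors within a class while black edges separate the two classes.
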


The next theorem tells us why One Piece is so important.

   \begin{theorem}[Theorem for One Piece]    \label{thm:RtilingOnePiece}
Every R-tiling on One Piece (which is either an MPG or an $n$-semi-MPG, $n\ge 3$) must be grand one.
   \end{theorem}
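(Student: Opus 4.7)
The plan is to verify Definition~\ref{def:grandGrand} directly: construct a partition $V(M) = V_{13} \uplus V_{24}$ so that $M_{bl}$ is bipartite with parts $(V_{13}, V_{24})$ and every red edge stays inside one part. The natural choice is to take $(V_{13}, V_{24})$ to be a proper 2-coloring of $M_{bl}$ itself, so the task reduces to showing (i) $M_{bl}$ contains no odd cycle, and (ii) for every red edge of $M$, its two endpoints are forced into the same part of that 2-coloring.

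For (i), let $C$ be any cycle in $M_{bl}$. Because $M$ is One Piece, at least one of the two planar regions determined by $C$ lies entirely inside $M$ and is filled by triangles of $M$; that region together with $C$ is a $|C|$-semi-MPG $\Sigma$ whose outer facet is $C$, and the restriction of $T_r$ to $\Sigma$ is an R-tiling. By Lemma~\ref{thm:evenoddRGB}(a) the number of black edges along $\Omega(\Sigma)=C$ is even, and since every edge of $C$ is black this forces $|C|$ to be even. (If $M$ is an MPG either side of $C$ works; if $M$ is an $n$-semi-MPG one picks the side not containing $\Omega(M)$; in the degenerate case $C=\Omega(M)$ one applies Lemma~\ref{thm:evenoddRGB}(a) to $M$ itself.) Thus $M_{bl}$ has no odd cycle and is bipartite.

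For (ii), let $uv$ be any red edge of $M$ and pick a triangular facet $uvw$ of $M$ containing it; such a facet exists because in a One Piece every edge lies in at least one triangle (an interior edge in two, and a boundary edge of an $n$-semi-MPG in exactly one). By Definition~\ref{def:Rtiling} the red edge of $uvw$ is unique, so both $uw$ and $vw$ are black. The path $u$--$w$--$v$ in $M_{bl}$ therefore forces $u$ and $v$ into the same part of every proper 2-coloring of the component of $M_{bl}$ containing them, so no red edge can cross the partition, and $T_r$ is grand. The only place that requires genuine care is the side-choice in step~(i): one must ensure the chosen planar region really is a One Piece so that Lemma~\ref{thm:evenoddRGB}(a) is available; beyond that bookkeeping I do not anticipate any substantive obstacle.
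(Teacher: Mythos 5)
Your proof is correct, but it takes a genuinely different route from the paper. The paper proves Theorem~\ref{thm:RtilingOnePiece} by a constructive induction on the red canal system: it picks a canal line $rCL$ (a ring in Case~A1 and Case~B, a path between two boundary black edges in Case~A2, using the non-crossing matching of Lemma~\ref{thm:InOutCanal}(c)), deletes the black edges crossed by $rCL$ together with the deja-vu edges, decomposes the remainder into pieces $M^r_i$ and $M^l_j$ whose boundary vertices inherit the marks $V_{13}$/$V_{24}$ from the two canal banks, and recurses until only isolated vertices remain. You instead verify Definition~\ref{def:grandGrand} directly by a parity argument: every all-black cycle $C$ bounds, on its triangulated side, a $|C|$-semi-MPG to which Lemma~\ref{thm:evenoddRGB}(a) applies, forcing $|C|$ even and hence $M_{bl}$ bipartite; and the two black edges of any triangle containing a red edge $uv$ give a black $2$-path $u$--$w$--$v$ pinning $u,v$ into the same part. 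This is sound and non-circular (Lemma~\ref{thm:evenoddRGB}(a) is proved beforehand and independently), and your side-choice worry is exactly the point the paper itself settles in Remark~\ref{re:CSigma}: in One Piece the interior of any cycle contains no outer facet, which is precisely where the hypothesis enters. In effect you have discovered, ahead of the text, the specialization of the paper's own later generalization: your argument is the [(a)~$\Rightarrow$~(c)] direction of Lemma~\ref{thm:evenNumberBlack} (whose proof uses the same bipartite-plus-red-$2$-path reasoning), combined with the observation that condition (a) there holds automatically on One Piece. What the comparison buys each side: your proof is much shorter, avoids the delicate bookkeeping of deja-vu edges, mark inheritance, and red-$1$-connected pieces, and extends verbatim to the multi-hole setting provided the boundary parity condition of Lemma~\ref{thm:evenNumberBlack}(b) is assumed; the paper's canal-line induction, though heavier, constructs the partition by orienting currents and thereby sets up the machinery (the $M^r_i$/$M^l_j$ decomposition, canal banks, the equivalence with grand canal systems in Definition~\ref{def:grandGrand2} and Lemma~\ref{thm:grandRtilingGrandLineSys}, reused in Lemma~\ref{thm:triangles}) on which Parts~II and~III rely, and it makes visible exactly where planarity is used (cf.\ the torus counterexample noted in the paper's proof).
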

   \begin{proof}
Denote this One Piece by $M$ and it has an R-tiling $T_r$. All we need to show is that [CLAIM] $V(M)$ can be partitioned into two disjoint parts $V_{13}$ and $V_{24}$ such that the subgraph $M_{bl}$ induced by black edges of $T_r$ is a bipartite graph with bipartite vertex sets $V_{13}$ and $V_{24}$, and no red edges link between $V_{13}$ and $V_{24}$. We will prove it by induction.
   
[A: $T_r$ on an $n$-semi-MPG]: {\bf Case A1.} Suppose all $n$ edges along the outer facet $n$-gon are red. Notice that when we have a border of $3$-gon outer facet, its three edges do not need to follow Definition~\ref{def:Rtiling}.  

Without loss of generality, we set these $n$ red edges along the unique out facet to be a part of $V_{13}$.  Let Figure~\ref{fig:OnePieceProof} be a demonstration, where $M$ has a 8-gon outer facet $v_0$-$v_1$-$\ldots$-$v_7$-$v_0$. We also denote $rC_1$ to be the red-connected component that contains these $n$ red edges. The red edges of $rC_1$ are exactly thick ones in Figure~\ref{fig:OnePieceProof}. 

Let us choose any red edge $e$ (say $v_0v_7$) along this outer facet, and then we follow the red canal line $rCL$ generated by $e$-triangle (there is no $e$-diamond) and indicate the direction of current to make $e$ on the right hand side because $v_0, v_7 \in V_{13}$.
   \begin{figure}[h]
   \begin{center}
   \includegraphics[scale=1]{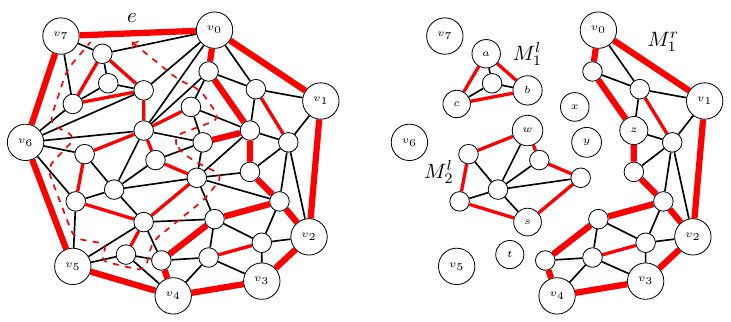}
   \end{center}
\caption{Proving process of Case A1}  
\label{fig:OnePieceProof}     
   \end{figure}
This $rCL$, the red dashed line with direction in Figure~\ref{fig:OnePieceProof}, must be a ring because all $n$ edges along the outer facet are red.  Obviously all red edges of canal bank on the right of $rCL$, denoted the edge set by $rCL^r$,  belong to $rC_1$. And then we can name $rC_2$ to be the red-connected component that contains those red edges of canal bank on the left of $rCL$, denoted the edge set by $rCL^l$. In Figure~\ref{fig:OnePieceProof}, $rCL^r$ and $rCL^l$, separated by $rCL$, are definitely red-disconnected; they are kind of parallel and linked by black edges as zigzag line drawing. (PS. This is not always true if the surface is not planar. The surface of torus can provide a counterexample.) 

How to guarantee [CLAIM]? At this stage (or every new stage), we only consider those vertices and edges involved by $rCL$ (respectively $rCL_i$). We see the right and the left canal banks of $rCL$ are two groups of vertices, say $V(rCL^r)$ and $V(rCL^l)$, which are subsets of the final $V_{13}$ and $V_{24}$ respectively. If we deal with $rCL_i$  at some following stage, exactly one of $V(rCL_i^r)$ and $V(rCL_i^l)$ \underline{inherits the mark} obtained from either $V_{13}$ or $V_{24}$; it is at least one due to connected graph; it is only one due to $M$ being planar graph and One Piece.\footnote{Please see the left graph in Figure~\ref{fig:counterEX3}. It is a counter example for an $(n_1, n_2)$-semi-MPG in Case A2. The reader might try a  counter example for an $(n_1, n_2)$-semi-MPG in Case A1.} Two groups of vertices in $V(rCL^r)$ and $V(rCL^l)$, separated by $rCL$, only use black edges to link between. Also those edges linking among vertices in $V(rCL^r)$ (or in $V(rCL^l)$) are red edges as the wall of canal bank of $rCL$ at this stage. Therefore, the [CLAIM] is good for $\bar{V} :=V(rCL^r)\ cup V(rCL^l)$ and those red edges and black edges involved by $rCL$. Not only taking care themselves, but also $V(rCL^r)$ and $V(rCL^l)$ inherit their marks obtained from $V_{13}$ and $V_{24}$ and bring the marks to the following stages of our induction proof; because on the other side of every non-deja-vu and non-outer-facet red edge, we will deal with the other $rCL_i$. We shall keep adding new vertices into $\bar{V}$ as bipartite-way and check the behavior of those red edges and black edges involved by the new $rCL_i$. While we obtain $V(M)=\bar{V}$, the proof of [CLAIM] is done.

Now we need to do something as preparation for the coming stages. We shall
    \begin{enumerate}
\item[(Blk):] delete all black edges along $rCL$, and 
\item[(Red):] delete all deja-vu edges along $rCL$, as well as the red edges that are both on the right of $rCL$ and along the $n$-gon.    
    \end{enumerate}
For our example, we shall delete (Red) four deja-vu edges, namely $bw$, $wx$, $yz$ and $st$. Also delete $v_4v_5$, $v_5v_6$, $v_6v_7$ and $v_7v_0$.  The remaining subgraph consist of several parts as red/black-connected components. Actually every part must be red-2-connected for its border. Being only red-1-connected, they shall be treated as different parts rather then one part. (i) The most easy part is a single vertex incident to no edges. For example, $v_5$, $v_6$, $v_7$, $x$, $y$ and $t$. At this new stage, a single vertex is the base case in our induction proof. This single vertex belongs to either $V(rCL^r)$ or $V(rCL^l)$ is easy to judge by $rCL$. (ii) Some parts are on the \underline{right} of $rCL$ each of which are again $n_i$-semi-MPG with all $n_i$ edges along the outer facet $n_i$-gon being red. Let us denote these parts by $M^r_i$. Some $M^r_i$ and $M^r_j$ might be red-1-connected; so we shall treat them as two distinct $n_i$-gon and $n_j$-gon outer facets as well as $n_i$- and $n_j$-semi-MPG. For our example, we only have $M^r_1$ which has two big red cycles that are red-2-connected. The out facet of each $M^r_i$ contains some vertices of the right canal bank of $rCL$ (or $V(rCL^r)$); therefore, all $n_i$ vertices of this out facet inherit the mark from $V_{13}$ made by the previous stage of our induction. Obvious, let us redo the process of Case A1 on these $M^r_i$. (iii) Some parts are on the \underline{left} of $rCL$, and let us  mark them by $M^l_j$. In Figure~\ref{fig:OnePieceProof}, we have $M^l_1$ and  $M^l_2$. (Let us further think about a new situation: if we merge vertices $b$ and $w$, then $M^l_1$ and  $M^l_2$ are red-1-connected; so they shall still be treated as two independent semi-MPGs.) The argument are nearly the same as (ii), unless all vertices of these out facets shall inherit the mark from $V_{24}$ made by the previous stage of our induction.

Keep working on (ii) or (iii) by redoing the process of Case A1 until all threads are terminated by (i). The process and [CLAIM] will be done finally.

{\bf Case A2.} Suppose there are $2k$ black edges ($n\ge 2k\ge 0$) along this single outer facet. This number must be even due to Lemma~\ref{thm:evenoddRGB}(a). We are going to apply induction proof for Case A2 by reducing this index $k$. Notice that $k=0$ is exactly Case A1 and it plays a role as the base case of our induction proof in this portion. The argument of the proof of Case A2 is similar to the proof of Case A1; so, we just sketch the main ideas of this proof. Especially how to guarantee [CLAIM]? Just follow the argument given in Case A1. 

Referring to Lemma~\ref{thm:InOutCanal}(c), all black edges along the unique outer facet form a non-crossing match by 
the connection of entrances and exits of this red canal system of $T_r$. Let us just choose a pair of black edges of this match, namely $e_1$ and $e_2$ in Figure~\ref{fig:OnePieceProof2} as a demonstration. 
   \begin{figure}[h]
   \begin{center}
   \includegraphics[scale=1]{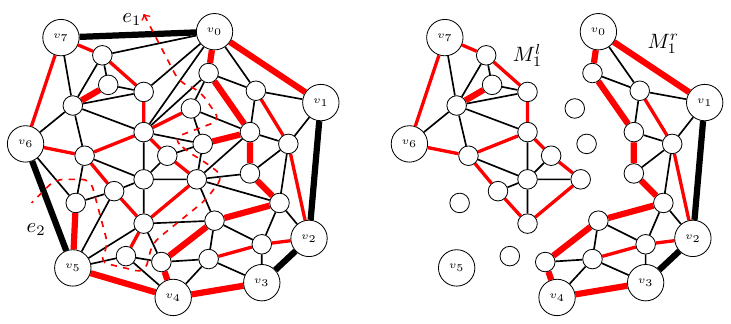}
   \end{center}
\caption{Proving process of Case A2}  
\label{fig:OnePieceProof2}     
   \end{figure}
A directed current (red canal line) $rCL$ is immediately provided. Again, we shall delete  (Blk) and (Red), in addition we shall delete $e_1$ and $e_2$. Like Case A1, we now have three kinds of parts as red/black-connected components: (i) a single vertex incident to no edges (ii) $M^r_i$ (iii) $M^l_j$. The argument of induction to show the [CLAIM] true is nearly the same. The key of induction is that $e_1$ and $e_2$ have been deleted and then each $M^r_i$ and $M^l_j$ has at most $2k-2$ black edges along its unique outer facet. We can perform this induction proof with this important key. Now we finish sketching the main ideas of this proof.

[B: $T_r$ on an MPG]: Every $rCL_i$ formed by this $T_r$ is a ring. We simply choose one $rCL$ and perform the same process given in Case A1, then we obtain three kinds of parts as red/black-connected components: (i) a single vertex incident to no edges (ii) $M^r_i$ (iii) $M^l_j$. The key of induction is that each $M^r_i$ and $M^l_j$ has its outer facet all red edges, and we shall directly apply Case A1 on each $M^r_i$ and $M^l_j$.  Now we finish sketching the main ideas of the proof of this portion.   
   \end{proof}

It is the right time to cash in the previous rain check. 
   \begin{proof}\
{\bf[(c) $\Rightarrow$ (a) for Theorem~\ref{thm:4RGBtiling}]:} An R-tiling on an  $M$, an MPG or an $n$-semi-MPG (One Piece), must be grand one by Theorem~\ref{thm:RtilingOnePiece}. So we have $V(M)=V_{13}  \uplus V_{24}$ and  $V_{13}$ is ready to color 1 and 3 and $V_{24}$ is ready to color 2 and 4. No red odd-cycle in $T_r$ guarantees a 4-coloring must be done.
   \end{proof}
   
Now we give a generalization of Theorem~\ref{thm:RtilingOnePiece}.
   \begin{lemma}    \label{thm:evenNumberBlack}
Let $M$ be an MPG, $k$-semi-MPG, or an $(k_1,k_2,\ldots,k_t)$-semi-MPG with an R-tiling $T_r$. Here we \underline{allow} two outer facets to share edges. The following three are equivalent: 
   \begin{itemize}
\item[(a)] The number of black edges along any cycle in $M$ is always even.
\item[(b)] The number of black edges along any outer face in $M$ is always even.
\item[(c)] The red tiling $T_r$ is grand. (Definition~\ref{def:grandGrand}, not Definition~\ref{def:grandGrand2}.)
   \end{itemize}
   \end{lemma}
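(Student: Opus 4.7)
The plan is to establish the cyclic chain of implications (c)$\Rightarrow$(a)$\Rightarrow$(b)$\Rightarrow$(c), since (a)$\Rightarrow$(b) is immediate once we agree to treat each outer facet as a closed walk (shared edges counted with multiplicity 2, which matches the counting convention already declared).

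For (c)$\Rightarrow$(a), I would use the standard bipartiteness argument tailored to our coloring. Assume $T_r$ is grand, so $V(M)=V_{13}\uplus V_{24}$, red edges stay inside the parts, and black edges go between them. Fix any cycle $C$ and traverse it; the side (with respect to $V_{13}$ versus $V_{24}$) only toggles when a black edge is crossed. Returning to the starting vertex forces an even number of toggles, so the count of black edges along $C$ is even. This essentially repackages Lemma~\ref{thm:RandomWalk} applied to a closed walk.

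The key step is (b)$\Rightarrow$(a), and here I would invoke Remark~\ref{re:CSigma}. Let $C$ be any cycle in $M$; it partitions $M$ into two sub-semi-MPG's $\Sigma^{+}$ and $\Sigma^{-}$, each carrying $C$ as an outer facet and inheriting the outer facets of $M$ lying in its region. Apply Lemma~\ref{thm:evenoddRGB}(a'), which allows shared edges between outer facets, to $\Sigma^{+}$: the total number of black edges along the entire boundary of $\Sigma^{+}$ is even. That boundary is the disjoint union (with shared edges counted twice, contributing nothing to parity) of $C$ together with the outer facets of $M$ located inside $C$. By hypothesis (b), each of those inherited outer facets contributes an even count; hence $C$ contributes an even count too. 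The delicate point, and the main obstacle, is handling the bookkeeping when $C$ uses some of $M$'s outer-facet edges or when outer facets of $M$ share edges among themselves, but the multiplicity-2 convention together with Lemma~\ref{thm:evenoddRGB}(a') is exactly tailored to absorb these contributions.

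For (a)$\Rightarrow$(c) I would construct the bipartition by path-parity. Pick a base vertex $v_{0}$, declare $v_{0}\in V_{13}$, and for every other vertex $v$ assign $v\in V_{13}$ or $v\in V_{24}$ according to whether some walk from $v_{0}$ to $v$ uses an even or odd number of black edges. Well-definedness follows from (a): any two such walks differ by a closed walk, which decomposes into cycles, each contributing evenly. The resulting partition automatically has $M_{bl}$ bipartite (a single black edge is an odd walk, so its endpoints lie in different parts) and every red edge inside a single part (a red edge contributes zero black edges). Connectedness of $M$ ensures the assignment covers all of $V(M)$, producing the grand structure of Definition~\ref{def:grandGrand}. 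Together with the closing observation that (a)$\Rightarrow$(b) is trivial, the three conditions are equivalent.
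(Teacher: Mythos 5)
Your proposal is correct, and two of its three legs coincide with the paper's own proof: your (c)$\Rightarrow$(a) toggle argument is precisely the paper's zigzag count between $V_{13}$ and $V_{24}$ (cf.\ Lemma~\ref{thm:RandomWalk}), and your (b)$\Rightarrow$(a) via Remark~\ref{re:CSigma} and Lemma~\ref{thm:evenoddRGB}(a') applied to the region $\Sigma^{+}$ bounded by $C$ is exactly the paper's reduction; the paper is no more explicit than you are about cycles that run along $\Omega(M)$, so flagging that bookkeeping as delicate is fair rather than a gap. Where you genuinely diverge is (a)$\Rightarrow$(c). The paper first deduces bipartiteness of $M_{bl}$ from the even-cycle criterion and must then separately argue that every red edge stays inside one part; this forces a case split: when no two outer facets share edges, black-connectivity of $M_{bl}$ (via the two black edges of each red half-tile) pins the bipartition down uniquely, but when shared edges are allowed $M_{bl}$ can be black-disconnected, and the paper needs two repair devices --- (B1) gluing phantom black half-tiles onto shared red edges, or (B2) a spanning-tree unification of the black components followed by restoring the remaining shared red edges one at a time. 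Your walk-parity potential --- assign $v$ to $V_{13}$ or $V_{24}$ by the parity of black edges on any $v_{0}$--$v$ walk taken in all of $M$, red edges permitted and counted zero --- handles every case uniformly, since well-definedness needs only hypothesis (a) plus connectivity of $M$ itself, not of $M_{bl}$, so the shared-edge pathology never surfaces. The one point you should spell out is the mod-$2$ decomposition behind ``any two walks differ by a closed walk, which decomposes into cycles'': a closed walk may backtrack, so it need not literally decompose into cycles; rather, the edges traversed an odd number of times form an even-degree subgraph, hence an edge-disjoint union of cycles, while backtracked edges contribute evenly to the black count, so (a) still forces even parity. With that sentence added, your (a)$\Rightarrow$(c) is a cleaner and slightly more general argument than the paper's hands-on surgery, at the modest cost of invoking this cycle-space fact.
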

   \begin{proof}\  
[(a) $\Leftrightarrow$ (b)]: Item (b) is weaker than (a), thus ``(a) $\Rightarrow$ (b)'' is trivial. Let us prove ``(a) $\Leftarrow$ (b).'' Provided any cycle $C$ that is not an outer facet in $M$, then inside of $C$ there are some outer facets or probably none, i.e., the structure inside of $C$ is then another MPG or another $(m_1,m_2,\ldots,m_h)$-semi-MPG, denoted by $M_C$, where the cycle $C$ itself forms a outer facet in $M_C$. By Lemma~\ref{thm:evenoddRGB}(a'), the total number of black edges in $\Omega(M_C)$ is even. As (b) provided, we shall see even number of the total black edges along the original outer facets of $M$ inside $C$, and then we conclude that $C$ must have even number of black edges.

[(a) $\Leftarrow$ (c)]: Given (c), the subgraph $M_{bl}$, which is induced by black edges of $T_r$, is bipartite with bipartite vertex set $V_{13}$ and $V_{24}$; also there is no red edge linking between $V_{13}$ and $V_{24}$. Given any cycle with red and black edges. If this cycle is made by pure black edges, then definitely it is even length. Suppose there are some red edges along this cycle. We simply remove these red edges which only links vertices in  
$V_{13}$/$V_{24}$. So the total number of black edges zigzagging between $V_{13}$ and $V_{24}$ is even.

[(a) $\Rightarrow$ (c)]: Given (a), we need to show $M_{bl}$, the subgraph induced by black edges of $T_r$, is bipartite. There is an elementary equivalent condition for a bipartite graph: A graph is bipartite if and only if every cycle of the graph is even length, including the situation that there is no cycle at all. The statement (a) do provide all possible black cycles must be even length; hence $M_{bl}$ is a bipartite graph. To finish this proof we still need to show that any red edge $ab$ of $T_r$ only links vertices in the same partite set.  

(A): If we do not allow two outer facets to share edges, then $M_{bl}$ must be black-connected; because removing any red edge $ab$ from $M$, vertices $a$ and $b$ are still connected by at least two black edges belonging to red triangle or red half-tile of $ab$. For black-connectivity, the bipartite sets $V_{13}$ and $V_{24}$ of $M_{bl}$ has only one way, while switching $13$ and $24$ for whole $V(M)$ is symmetric. In this case, the end vertices $a$ and $b$ of any red edge $ab$ must either belong to $V_{13}$ or $V_{24}$. The reason still comes from the two black edges of the half-tile of $ab$.  

(B): However, if we allow two outer facets to share edges, then $M_{bl}$ might be black-disconnected. Here we offer two different ways, (B1) and (B2), to conquer this situation. (B1) For any shared edge $ab$ with red color, we simply draw two extra black edges to make a new red half-tile of $ab$. Now the hypothesis of (a) still holds for this new $(k'_1,k'_2,\ldots,k'_t)$-semi-MPG $M'$ with a new R-tiling $T'_r$, where $M'$ has no shared edges of two outer facets. Therefore, (A) guarantees $M'$ correct for (c), and then $M'$ guarantees $M$ correct for (c) after removing all extra black edges. Then the proof is complete. 

(B2) Removing those shared red edges of two outer facets, denoted this edge set by $SRE$, we might get a disconnected subgraph graph, say $\bar{M}_0$, with components $T_1, T_2,\ldots, T_p$. Especially each $T_i$ is black-connected and it is correct for (c) with two partite sets of its vertices. Since $M$ is connected, there is a subset $SRE'$ of $SRE$ such that restoring all red edges in $SRE'$ back to $\bar{M}_1$ will link all $T_1, T_2,\ldots, T_p$ to be 1-connected, i.e., if we treat each $T_i$ as a single node then $SRE'$ induces a tree. Now we can unify all pairs of bipartite sets of $T_i$ to associate with their own family, $V_{13}$ or $V_{24}$, through this connection by $SRE'$; also all red edges so far definitely links vertices in $V_{13}$/$V_{24}$.  How about the rest red edges in $SRE-SRE'$? We shall start with $\bar{M}_1$ and restore these red edges one-by-one. Once we return a red edge $ab\in SRE-SRE'$ back to $\bar{M}_i$ to reach the current stage $\bar{M}_{i+1}$, all cycles passing through $ab$ at this current stage have even number of black edges by (a); obviously both $a$ and $b$ belong to either $V_{13}$ or $V_{24}$ at the previous stage $\bar{M}_i$; so this red edge $ab$ satisfies (c). Notice that this returning method has to be one-by-one; for instance, we cannot deal with $ab, a'b'\in \bar{M}_{i+1}-\bar{M}_{i}$, because a cycle passing through both $ab$ and $a'b'$ will cause a nightmare. Once $\bar{M}_j=M$, the proof of (B2) is complete.      
   \end{proof}

   \begin{remark}
Lemma~\ref{thm:evenoddRGB}(a) and (a') check the parity of the total number of black edges along all outer facets of a semi-MPG with an R-tiling, and Lemma~\ref{thm:evenNumberBlack}(b) also exams all outer facets, but check them one by one.
   \end{remark}

   \begin{theorem}[The First Fundamental Theorem v2: a generalized version of Theorem~\ref{thm:4RGBtiling}]  \label{thm:4RGBtilingGeneralization}
Let $M$ be an MPG or an $n$-/$(n_1,n_2,\ldots,n_k)$-semi-MPG. Here we allow two outer facets to share edges in $M$. The following are equivalent:
   \begin{itemize}
\item[(a)] $M$ is 4-colorable.
\item[(b)] $M$ has an RGB-tiling such that along every $m$-cycle in $M$ the numbers of red, green and blue edges are all even if $m$ is even, and all odd if $m$ is odd.
\item[(c)] $M$ has an RGB-tiling such that along every $n_i$-gon outer facet the numbers of red, green and blue edges are all even if $n_i$ is even, and all odd if $n_i$ is odd.
\item[(d)] $M$ has a grand R-tiling without red odd-cycles.
   \end{itemize}
   \end{theorem}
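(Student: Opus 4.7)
The plan is to close the loop (a) $\Rightarrow$ (b) $\Rightarrow$ (c) $\Rightarrow$ (d) $\Rightarrow$ (a). The implication (b) $\Rightarrow$ (c) is free since every outer facet is itself a cycle, and (d) $\Rightarrow$ (a) is a direct citation of Lemma~\ref{thm:grandRtilingNoOddCycle}. The substantive links are (a) $\Rightarrow$ (b) and (c) $\Rightarrow$ (d); in both, the core idea is to read the three edge colors as elements of the Klein four-group and to invoke Lemma~\ref{thm:evenNumberBlack} symmetrically, letting each of R, G, B play the role of ``red'' while the other two are relabelled ``black.''

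For (a) $\Rightarrow$ (b), I would identify the color set $\{1,2,3,4\}$ with $\mathbb{Z}_2 \times \mathbb{Z}_2$ via $1 \mapsto (0,0)$, $2 \mapsto (1,0)$, $3 \mapsto (1,1)$, $4 \mapsto (0,1)$. Under the RGB convention of Section~\ref{sec:RGB1}, a red edge then has endpoint difference $(1,1)$, a green edge $(0,1)$, and a blue edge $(1,0)$. Around any $m$-cycle the sum of endpoint differences vanishes in $\mathbb{Z}_2^2$; reading the two coordinates separately yields $\#R + \#B \equiv 0$ and $\#R + \#G \equiv 0 \pmod 2$, so $\#R \equiv \#G \equiv \#B \pmod 2$. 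Since $\#R + \#G + \#B = m$, all three counts share the parity of $m$, which is exactly (b).

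For (c) $\Rightarrow$ (d), first recast (c) as three symmetric outer-facet parities: along each $n_i$-gon, each of $\#G + \#B = n_i - \#R$, $\#R + \#B$, and $\#R + \#G$ is even (check both cases $n_i$ even and $n_i$ odd). Viewing the RGB-tiling as an R-tiling with G- and B-edges relabelled black, Lemma~\ref{thm:evenNumberBlack} upgrades the first parity from outer facets to every cycle and simultaneously forces the R-tiling to be grand. Rerunning the lemma with G in the role of red, and again with B in the role of red, produces the analogous global parities $\#R + \#B \equiv 0$ and $\#R + \#G \equiv 0 \pmod 2$ along every cycle. Combining the three congruences gives $\#R \equiv \#G \equiv \#B \pmod 2$ along every cycle of $M$; specializing to a purely red cycle (where $\#G = \#B = 0$) forces its length to be even. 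Thus the R-tiling is both grand and free of red odd-cycles, which is (d).

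The main obstacle I expect is the bookkeeping in (c) $\Rightarrow$ (d): one must run Lemma~\ref{thm:evenNumberBlack} three times, on three different ``single-color versus rest'' R-tilings extracted from the same RGB-tiling, and then stitch the parity conclusions together. A secondary check is that Lemma~\ref{thm:evenNumberBlack} is already stated to accommodate the multiplicity-$2$ counting of edges shared by two outer facets (compare Corollary~\ref{thm:evenoddRGB2}), which matches the generality demanded here, so no separate argument is needed for the shared-edge case.
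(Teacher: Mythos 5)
Your proposal is correct, but it routes the argument differently from the paper in two places, and the differences are instructive. The paper does not prove a single cycle of implications: it establishes (b) $\Leftrightarrow$ (c) directly by applying Lemma~\ref{thm:evenNumberBlack} three times (once for each of the R-, G-, B-tilings extracted from the RGB-tiling), then proves (a) $\Rightarrow$ (b), (b) $\Rightarrow$ (d), and (d) $\Rightarrow$ (a); you instead close the loop (a) $\Rightarrow$ (b) $\Rightarrow$ (c) $\Rightarrow$ (d) $\Rightarrow$ (a), so your heavy lifting happens in (c) $\Rightarrow$ (d) where the paper's happens in (b) $\Leftrightarrow$ (c) --- but both rest on exactly the same three symmetric invocations of Lemma~\ref{thm:evenNumberBlack}, so this is a reorganization rather than new mathematics. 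The genuinely different ingredient is your proof of (a) $\Rightarrow$ (b): the paper argues pictorially, tracing the restriction of the 4-coloring to the cycle as a closed walk around the colored $K_4$ of Figure~\ref{fig:1234EdgeColor} and counting up- versus down-crossings of an invisible horizontal line $\ell$ to get that $\#R+\#B$ is even, invoking symmetry for the other pairs; your identification of $\{1,2,3,4\}$ with the Klein group $\mathbb{Z}_2\times\mathbb{Z}_2$ (so that R, G, B edges carry the three nonzero endpoint differences, which must sum to zero around any cycle) yields all three pairwise parities $\#R+\#G\equiv\#R+\#B\equiv\#G+\#B\equiv 0 \pmod 2$ in one stroke, and is cleaner and easier to verify than the crossing count. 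One detail in your (c) $\Rightarrow$ (d) deserves emphasis because it is easy to get wrong: the run of Lemma~\ref{thm:evenNumberBlack} with R in the role of red is vacuous on a purely red cycle (there $\#G+\#B=0$ trivially), so the no-red-odd-cycle conclusion genuinely requires the G-run (or B-run), which gives $\#R+\#B$ even on every cycle and hence $\#R$ even when $\#B=0$; you do run the lemma three times, so your argument is complete --- the paper avoids this subtlety by first securing the full parity statement (b) and then observing that a red $m$-cycle with $\#G=\#B=0$ forces $m$ even. Your closing remark that Lemma~\ref{thm:evenNumberBlack} already absorbs the multiplicity-$2$ convention for edges shared by two outer facets matches the paper's level of care on that point.
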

   \begin{proof}\
[(b) $\Leftrightarrow$ (c)]: An RGB-tiling is also three coexisting R-, G- and B-tilings. We just apply Lemma~\ref{thm:evenNumberBlack}(a) and (c) w.r.t.\ R-, G- and B-tilings, then we obtain ``(b) $\Leftrightarrow$ (c)'' of this Theorem.

[(a) $\Rightarrow$ (b)]: Let $f:V(M)\rightarrow \{1,2,3,4\}$ be a 4-colorable function and $C:= v_1$-$v_2$-$\ldots$-$v_m$ be any $m$-cycle in $M$. For symmetry we only show a general claim that the total number of red edges and blue edges along $C$ is even.  The function $f$ restricted on $C$ can be traced as a corresponding closed walk around the middle graph in Figure~\ref{fig:1234EdgeColor}, i.e., we just follow the numbers $f(v_i)|_{v_i\in C}$. Since we only consider red edges and blue edges, which are the two vertical lines and the two diagonal lines of the middle graph in Figure~\ref{fig:1234EdgeColor}; but we ignore the two horizontal lines. Let us consider an invisible horizontal line $\ell$ which just lays in the middle of ``the middle graph in Figure~\ref{fig:1234EdgeColor}.''  To be a closed walk, definitely there are as many up direction crossings of $\ell$ (including ``up'' vertically and ``up'' diagonally) as down direction crossings of $\ell$. Therefore, the total number of red edges and blue edges along the closed walk as well as along $C$ must be even.
Thus the proof is done. 

[(b) $\Rightarrow$ (d)]: Let $T_r$ be an R-tiling which is induced by the given RGB-tiling on $M$ claimed in (b). By the equivalence given in Lemma~\ref{thm:evenNumberBlack}, $T_r$ is a grand R-tiling when we treat green edges and blue edges as black color. If there is a red cycle in $M$, the cycle must be even; because in this red cycle the numbers of green edges and blue edges are both zero.    
   
[(d) $\Rightarrow$ (a)]:  We already have had Lemma~\ref{thm:grandRtilingNoOddCycle} to prove this direction.
   \end{proof}

Let us think differently. Suppose that finding a 4-coloring function is not our primary purpose, but we do need to know whether this $R$-tiling can induce a coexisting G- or B-tiling or not.  The coexistence property for whole $M$ can be divided into small piece to check, i.e., we shall check each red canal line $rCL_i$. The black edges zigzagging along $rCL_i$ are supposed to color green and blue alternately. There is no problem for a $rCL_i$ as a line; however for a $rCL_i$ as a ring, we must require the number of triangles along $rCL_i$ even.   

Furthermore, let us study the parity property that connects between those triangles along $rCL_i$ and those cycles of red canal bank along $rCL_i$.

   \begin{lemma} \label{thm:triangles}
Let $M$ be an MPG or a $k$-/$(k_1,k_2,\ldots,k_t)$-semi-MPG with an R-tiling $T_r$. Let us choose any red canal line $rCL$ as a \underline{ring} if there is. Recall the notation $M^r_i$ and $M^l_j$ defined in the proof of Theorem~\ref{thm:RtilingOnePiece}; but this time (Red): only delete deja-vu edges along $rCL$.
   \begin{enumerate}
\item[(a)] All $M^r_i$/$M^l_j$ are semi-MPG's. 
\item[(b)] We are only interested in those $n_i$-/$m_j$-gon outer facets of $M^r_i$/$M^l_j$ along $rCL$. The number of triangles along $rCL$ is even if and only if the sum $\sum_i n_i +\sum_j m_j$ is even. 
   \end{enumerate}
   \end{lemma}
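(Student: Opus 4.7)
My plan is to prove both parts by edge-counting in the annular strip $S := \bigcup_{k=1}^t T_k$ of the $t$ triangles along $rCL$, and then reading off the parity of $\sum_i n_i + \sum_j m_j$ from the boundary size of $S$.

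For (a), each $M^r_i$ (and similarly $M^l_j$) is a connected induced subgraph of the planar graph $M$, hence planar. All of its interior facets remain triangles inherited from $M$: the only deletions---the $t$ black edges shared between consecutive triangles of $rCL$ and the $d$ deja-vu red edges---lie interior to the strip-triangles $T_k$ rather than interior to $M^r_i$ itself. The removed edges then expose a new outer facet boundary walk along $rCL$, making $M^r_i$ a semi-MPG.

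For (b), I would double-count edge--triangle incidences in $S$. Each triangle contributes $3$ incidences, giving $3t$ in total; on the other hand, each of the $t$ shared black edges and each of the $d$ deja-vu red edges contributes $2$ (being shared by two triangles of $S$), while every remaining red edge---the non-deja-vu ones on $rCL^r \cup rCL^l$---contributes $1$ and therefore lies on $\partial S$. Hence
\[
3t \;=\; 2(t+d) + |\partial S|, \qquad \text{so} \qquad |\partial S| \;=\; t - 2d .
\]
Each edge of $\partial S$ borders a unique component $M^r_i$ or $M^l_j$ on the opposite side of $rCL$ and appears on that component's outer facet along $rCL$. Taking into account bridge-type edges that are traversed twice in a facet boundary walk---each contributing $2$ to some $n_i$ or $m_j$---one obtains $\sum_i n_i + \sum_j m_j = (t-2d) + 2c$ for some nonnegative integer $c$. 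Reducing modulo $2$ gives $\sum_i n_i + \sum_j m_j \equiv t \pmod{2}$, which is exactly (b).

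The main obstacle is the topological bookkeeping around apex-sharing vertices and endpoints of deleted deja-vu edges, which pinch the annular strip $S$ and can force the outer-facet boundary walk of a component to traverse certain edges with multiplicity. The delicate step is verifying rigorously that every such extra traversal contributes a multiple of $2$ to $\sum_i n_i + \sum_j m_j$, so that parity is preserved; once this is handled, the parity equivalence follows immediately from the clean edge-count identity $|\partial S| = t - 2d$.
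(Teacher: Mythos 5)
Your proof is correct and is in substance the same as the paper's: the paper's key identity $\#(\text{triangles along } rCL)= e(rCL^r)+e(rCL^l)$, with each deja-vu edge counted with multiplicity $2$, is exactly your edge-count $\lvert\partial S\rvert = t-2d$ rearranged (your $3t = 2(t+d)+\lvert\partial S\rvert$ just reaches it by counting all edge--triangle incidences instead of the one red edge per half-tile), and both arguments finish by reducing $\sum_i n_i+\sum_j m_j \equiv t \pmod 2$. The ``bridge-type'' traversals you hedge against with $+2c$ do not arise in the paper's setup, since red-1-connected pieces are declared to be distinct semi-MPG's with their own outer facets (so each non-deja-vu bank edge is traversed exactly once and $c=0$, matching the paper's example $20 = 28 - 2\cdot 4$); note also that the paper states its identity so that it holds for $rCL$ a path as well as a ring, whereas your count of $t$ shared black edges uses the ring hypothesis.
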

   \begin{proof}
We would like to use Figure~\ref{fig:OnePieceProof3} as an example. After deletion by (Blk) and (Red), we obtain $M^r_1$, $M^l_1$ and $M^l_2$ respectively with 11-, 3- 
and 6-gons, which are marked by black dots; so $\sum_i n_i +\sum_j m_j=20$. Notice that $M^r_1$ is an $(11,9)$-semi-MPG and we are only interested in its 11-gon.  There are 28 triangles along $rCL$. Their difference is 8 and it is caused by 4 deja-vu edges.
   \begin{figure}[h]
   \begin{center}
   \includegraphics[scale=1]{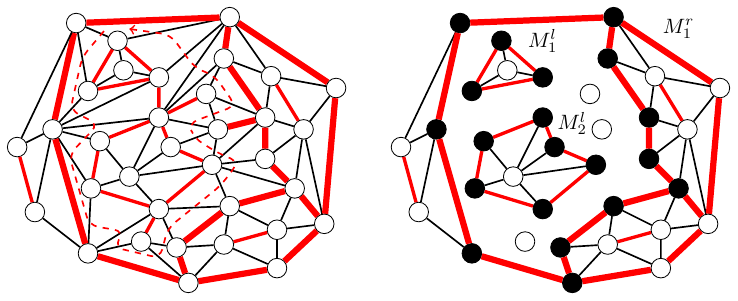}
   \end{center}
\caption{Process of Lemma~\ref{thm:triangles}}  
\label{fig:OnePieceProof3}     
   \end{figure}

Here we need a general identity good for both ring or path:  
  $$
\text{\#(triangles along $rCL$)}= e(rCL^r) + e(rCL^l),
  $$    
where $e(rCL^\ast)$ is the number of red edges along the canal bank $rCL^\ast$ with each deja-vu edge counted by multiplicity 2. Because each deja-vu edge counted by multiplicity 2, so the parity of $e(rCL^r) + e(rCL^l)$ equals the parity of $\sum_i n_i +\sum_j m_j$.  
   \end{proof}

In a general graph, a circuit or closed walk (of odd length) made by edge set $W$ of sequence, then there exists a subsequence $C\subseteq W$ such that $C$ is a minimal cycle (of odd length respectively). Let us back to Figure~\ref{fig:OnePieceProof3}, we are interested in 11-, 3- and 6-gons of red cycles with vertices marked by black dots. Precisely we consider them the \emph{minimal} red cycles w.r.t.\ to this $rCL$. Literally a minimal red cycle means no way to shrink it smaller. However, what is ``smaller''? An MPG is kind of sphere and there are two sides to make a cycle smaller. That is why we need to add ``w.r.t.\ to this $rCL$''. 

   \begin{example}
In Figure~\ref{fig:OnePieceProof3} for instance, the 3-gon/6-gon red cycle is minimal not only w.r.t.\ to this $rCL$ but also w.r.t.\ to another canal line inside itself. Let us look at $M^r_1$ in Figure~\ref{fig:OnePieceProof3}.  There are seven different red cycles (there are eight different red cycles if including the cycle of empty). Obviously 3 of them are minimal ones, namely the 11-gon, 7-gon and 6-gon are minimal w.r.t.\ their own $rCL_i$. The rest four cycles are made by combinations of these three minimal ones.    
   \end{example}

Having all red cycles of $T_r$ even length is the best. In this way we can achieve a coexisting RGB-tiling. However, this RGB-tiling might not be grand for its R-, G- and B-tilings. Such kind of examples have been shown in Example~\ref{ex:Counterexample3}.  Anyway, once an RGB-tiling exists, there are $2^N$ different coexisting RGB-tiling induced by this R-tiling, where $N$ is the total number of red canal lines $rCL_i$ including both rings and paths. 

   \begin{remark}
In order to get an RGB-tiling $T_{rgb}$ on an MPG or an $n$-/$(n_1,\ldots,n_k)$-semi-MPG $M$, we shall first try to pave an R-tiling $T_r$. Suppose that occasionally we obtain a red odd-cycle $C_r$ in the middle of our working. Let $\Sigma^+$ ($\Sigma^-$) denote   
the subgraph of $M$ inside (outside) of $C_r$. \underline{If $\Sigma^+$ is an $|C_r|$-semi-MPG} with the only outer facet $C_r$, then we might achieve an R-tiling $T_r$ on $\Sigma^+$, but never an RGB-tiling.  Why? Let us consider the last step to achieve a possible $T_r$ on $\Sigma^+$. This last step can be two possible ways: (1) to pave a red diamond inside $C_r$ or (2) to pave a red triangle along $C_r$. Just before this last step to pave, we see a $(4, |C_r|)$-semi-MPG $M_1$ for (1); or a $(3, |C_r|)$-semi-MPG $M_2$ for (2), where outer facets 3-gon and $|C_r|$-gon share a same red edge. Now we shall refer to Corollary~\ref{thm:evenoddRGB2} and Remark~\ref{re:CSigma}.   The all-even/all-odd property cannot hold for $M_1$ and $M_2$. We will see $M=EP$ and both $\Sigma^+$ and $\Sigma^-$ in form of $M_2$ in Part II of this paper.
   \end{remark}

\section{Degree 5 vertices in any $EP$}

Referring to Theorem~\ref{thm:V5more} and $EP\in e\mathcal{MPGN} \subseteq m\mathcal{N}4$, we know any $v\in V(EP)$ must have $\deg(v)\ge 5$. An important preliminary knowledge is about vertices in $EP\in e\mathcal{MPGN}4$ being degree exactly 5. 

Given a graph $G$, let $V_k(G)$ (or simply $V_k$) denote the set of vertices of degree $k$ in $G$. Also let $\#V_k(G)$ (or simply $\#V_k$) denote cardinality of $V_k(G)$. If $G$ is a planar graph, the general notation $\#V, \#E, \#F$ are the numbers of vertices, edges, facets in $G$ respectively.
   \begin{theorem} \label{thm:V5}
Let $G$ be an MPG with the degrees of all vertices at least 5. We have an inequality
   \begin{equation}
   \#V_5 = 12+ \#V_7 +2\#V_8 +3\#V_7 +\cdots  \label{eq:V5}
   \end{equation}
   Or there are at least 12 vertices are exactly degree 5.         
   \end{theorem}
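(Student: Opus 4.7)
The plan is to use the standard Euler-formula argument for triangulations combined with the handshake lemma, which directly yields the identity.

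First I would invoke Euler's formula for the connected planar graph $G$: $\#V - \#E + \#F = 2$. Since $G$ is an MPG, every facet (including the outer one) is a triangle, so each face is bounded by exactly $3$ edges and each edge borders exactly $2$ facets. This gives $3\#F = 2\#E$, hence $\#F = \tfrac{2}{3}\#E$. Substituting back into Euler's formula and clearing denominators produces $\#E = 3\#V - 6$.

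Next I would apply the handshake lemma $\sum_{v\in V(G)} \deg(v) = 2\#E$, which combined with $\#E = 3\#V - 6$ gives $\sum_v \deg(v) = 6\#V - 12$, or equivalently
\[
\sum_{v\in V(G)} \bigl(\deg(v)-6\bigr) = -12.
\]
Now I would break this sum up according to the degree sequence. Since by hypothesis $\deg(v)\ge 5$ for every $v$, I can write
\[
\sum_{k\ge 5} (k-6)\,\#V_k \;=\; -\#V_5 + 0\cdot\#V_6 + \#V_7 + 2\#V_8 + 3\#V_9 + \cdots \;=\; -12,
\]
and rearranging yields exactly the claimed identity
\[
\#V_5 \;=\; 12 + \#V_7 + 2\#V_8 + 3\#V_9 + \cdots .
\]
In particular $\#V_5\ge 12$, which is the advertised corollary.

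There is essentially no main obstacle: once one notices that the MPG hypothesis forces $3\#F=2\#E$, the rest is bookkeeping. The only subtlety worth mentioning in the write-up is that the outer facet of an MPG is also a triangle, which is what validates $3\#F=2\#E$; for a general planar graph one would only get an inequality, and the identity $\#V_5 = 12 + \sum_{k\ge 7}(k-6)\#V_k$ would degrade to the inequality $\#V_5 \ge 12 + \sum_{k\ge 7}(k-6)\#V_k$. I would also briefly remark that the displayed formula in the statement has a minor typo (the coefficient $3$ should multiply $\#V_9$, not $\#V_7$), which the proof above makes explicit.
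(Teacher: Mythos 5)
Your proof is correct and follows essentially the same route as the paper's: Euler's formula, the triangulation relation $3\#F=2\#E$, and the handshake lemma, with the only cosmetic difference being that you eliminate $\#F$ first (via $\#E=3\#V-6$) while the paper keeps $\#V_2,\#V_3,\#V_4$ in a general identity before setting them to zero. You are also right that the displayed formula in the statement contains a typo and the last written term should be $3\#V_9$, as your derivation makes explicit.
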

   \begin{proof}
Let us first ignore the requirement that degree at least 5. We use the minimum requirement: all vertices are degree at least 2 to preserve a good looking for triangular facets of $G$. We consider four equations: $\#V -\#E +\#F=2$ (Euler's formula), $3\#F=2\#E$ (every facet is a triangle), and $\Sigma_{k\ge 2} k \#V_k =2\#E$, and $\#V=\Sigma_{k\ge 2} \#V_k$.
   \begin{eqnarray}
   6\#V-(2+4)\#E+6\#F &=& 12; \nonumber \\  
   6\Sigma_{k\ge 2} \#V_k -\left(\Sigma_{k\ge 2} k\#V_k + 6\#F \right) +6\#F &=& 12;\nonumber \\ 
   4\#V_2 +3\#V_3 +2\#V_4 +\#V_5 &=& 12 + \Sigma_{k\ge 7} (k-6)\#V_k. \label{eq:V2V5}
   \end{eqnarray}    
By hypothesis, $\#V_2 =\#V_3 =\#V_4=0$. The proof is done. 
   \end{proof}
   \begin{remark}
Since $EP \in e\mathcal{MPGN}4$ has all it vertex of degree at least 5, $EP$ has at least 12 vertices are exactly degree 5. What an interesting minimum number is 12! This minimum situation exactly corresponds to an icosahedron which is a convex polyhedron with 20 faces (trianglar facets), 30 edges and 12 vertices. If only $\#V_3$ is non-zero, then $\#V_3=4$ exactly corresponds to a tetrahedron. If only $\#V_4$ is non-zero, then $\#V_4=6$ exactly corresponds to an octahedron. For a general situation without asking some $V_i$ must be $0$, what is a combinatorial description or explanation for Equation~\ref{eq:V2V5}? 
   \end{remark}
   
\subsection{Subgraphs created by an RGB-tiling}

In this subsection, we consider a fixed MPG $M$ with a fixed RGB-tiling $T_{rgb}$.  Let $T_{r}$, $T_{g}$ and $T_{b}$ be the R-tiling, G-tiling and B-tiling induced by $T_{rgb}$ respectively. We also use $T_r$, $T_g$ and $T_b$ to represent the sets of red, green, blue edges respectively. For any vertex $v\in V(M)$, it is nature to define $\deg_r(v)$ to be the numbers of red edges incident to $v$; so to define $\deg_g(v)$ and $\deg_b(v)$.  

First, we focus on $M-T_r$ which is also a bipartite graph made by black edges ${T_r}^{-1}(\text{black})$. The planar graph $M-T_r$ is also made by squares which come from diamonds without red edges in middle. Let us define $\deg^{\bar{r}}(v):= \deg_g(v)+\deg_b(v)$ for every $v\in V(M-T_r)$. The corresponding numbers $\#V^{\bar{r}}$, $\#E^{\bar{r}}$, $\#F^{\bar{r}}$ and $\#V^{\bar{r}}_j$ for $M-T_r$ are nearly duplicated from the previous discussion. Clearly $\#V^{\bar{r}}=\#V$,  $3\#E^{\bar{r}}=2\#E$ and $2\#F^{\bar{r}}=\#F$.
 
The following derivation steps are as same as the previous discussion. We have   
$\#V^{\bar{r}} -\#E^{\bar{r}} +\#F^{\bar{r}}=2$ (Euler's formula), $4\#F^{\bar{r}}=2\#E^{\bar{r}}$ (four black edges form a square), and $\Sigma_{j\ge 2} j \#V^{\bar{r}}_j =2\#E^{\bar{r}}$, and $\#V^{\bar{r}}=\Sigma_{j\ge 2} \#V^{\bar{r}}_j$.   
   \begin{eqnarray}
   4\#V^{\bar{r}}-(2+2)\#E^{\bar{r}}+4\#F^{\bar{r}} &=& 8; \nonumber \\  
   4\Sigma_{j\ge 2} \#V^{\bar{r}}_j -\left(\Sigma_{j\ge 2} j\#V^{\bar{r}}_j + 4\#F^{\bar{r}} \right) +4\#F^{\bar{r}} &=& 8;\nonumber \\ 
   2\#V^{\bar{r}}_2 +\#V^{\bar{r}}_3 &=& 8 + \Sigma_{j\ge 5} (j-4)\#V^{\bar{r}}_j. \label{eq:V2V5square}
   \end{eqnarray}        

The basic requirement of an RGB-tiling is three different edge-colors in every triangle. This nature tells us: 
   \begin{eqnarray}
\delta(\text{$\deg(v)$ is odd}) \le &\deg_r(v)& \le \lfloor \deg(v)/2\rfloor; \nonumber \\ 
\lceil \deg(v)/2 \rceil \le &\deg^{\bar{r}}(v)& \le \deg(v)-\delta(\text{$\deg(v)$ is odd}), \label{eq:V2V5degBar}
   \end{eqnarray}    
where $\delta(\ast)$ is the Kronecker Delta function. Let us denote $\#V^{\bar{r}}_{k,j}$ to be the number of vertices $v$ in $M$ with $\deg(v)=k$ and $\deg^{\bar{r}}(v)=j$. By Inequality~\ref{eq:V2V5degBar}, we have 
   $$
\#V_k=\sum_{j=\lceil k/2 \rceil}^{k-\delta(\text{$k$ is odd})} \#V^{\bar{r}}_{k,j}.
   $$
Especially, we have     
   $$
   \begin{array}{cllllll}
\#V_4 &=& \#V^{\bar{r}}_{4,2} \ +\!\!\!\! & \#V^{\bar{r}}_{4,3} \ +\!\!\!\! & \#V^{\bar{r}}_{4,4};\\
\#V_5 &=&                         & \#V^{\bar{r}}_{5,3} \ +\!\!\!\! & \#V^{\bar{r}}_{5,4};\\ 
\#V_6 &=&                         & \#V^{\bar{r}}_{6,3} \ +\!\!\!\! & \#V^{\bar{r}}_{6,4} \ +\!\!\!\! & \#V^{\bar{r}}_{6,5} \ +\!\!\!\! & \#V^{\bar{r}}_{6,6};\\
\#V_7 &=&                         &                         & \#V^{\bar{r}}_{7,4} \ +\!\!\!\! & \#V^{\bar{r}}_{7,5} \ +\!\!\!\! & \#V^{\bar{r}}_{7,6}.
   \end{array}
   $$ 
We show these identities of $\#V_k$ for the first few $k$. Index $k$ start with $4$, because each MPG that we will deal with only has at most one vertices of degree 4 and the rest of vertices of degree at least 5.

We also use $\#V^R_{k,i}$ to denote the number of vertices in $M$ which are degree $k$ and red-degree $i$; while the superscripts $R$ and $\bar{r}$ for $\#V^R_{k,i}$ and $\#V^{\bar{r}}_{k,j}$ are easy to distinguish. Clearly $\#V^R_{k,i} = \#V^{\bar{r}}_{k,k-i}$.  

Using Equations~\ref{eq:V2V5} and~\ref{eq:V2V5degBar}, we plan to develop a skill to estimate how many degree 6 vertices are neighbors of a degree 5 vertex for any $EP\in e\mathcal{MPGN}$.

\bibliographystyle{amsplain}

\begin{thebibliography}{10}

\bibitem{AppelHaken1977} K.\ Appel, W.\ Haken and J.\ Koch, \textit{Every planar map is four colorable, part II: reducibility}, Illinois J.\ of Math.\ \textbf{21} (1977), 491--567. 

\bibitem{CharLes2005} G.\ Chartrand and L.\ Lesniak, \textit{Graphs and Digraphs}, Chapman \& Hall, 2005.

\bibitem{Heawood1890} P.J.\ Heawood, \textit{Map-Colour Theorem}, Q.\ J.\ Math.\ \textbf{24} (1890), 332--338.

\bibitem{Kempe1879} A.B.\ Kempe , \textit{On the Geographical Problem of the Four Colours}, Am.\ J.\ Math.\ \textbf{2} (1879), 193--220.

\bibitem{Liu2020} S.-C.\ Liu, \textit{The Neighbors of all Degree 5s’ on the Extremal MPG}, 
\url{https://www.math.sinica.edu.tw/www/seminar/sem20_his_e.jsp?year=2020}, item 10.

\bibitem{MarMou2012} P.\ Maritz and S.\ Mouton, \textit{Francis Guthrie: A Colourful Life}, Math.\ Intelligencer \textbf{34} (2012), 67--75.

\bibitem{Petersen1898}  J.\ Petersen, \textit{Sur le th\'{e}or\`{e}me de Tait}, L'Interm\'{e}diaire des Math\'{e}maticiens \textbf{5} (1898), 225--227.

\bibitem{Tait1880}  P.G.\ Tait, \textit{Remarks on the colourings of maps}, Proc.\ R.\ Soc.\ Edinburgh \textbf{10} (1880), 501--503.


\end{thebibliography}

\end{document}